\numberwithin{equation}{section}
\newtheorem{thm}{Theorem}
\newtheorem{corr}[thm]{Corollary}
\newtheorem{lem}[thm]{Lemma}
\newtheorem{prop}[thm]{Proposition}
\theoremstyle{definition}
\newtheorem{defn}{Definition}[section]
\theoremstyle{remark}
\newtheorem{rem}[thm]{Remark}
\def\R{\mathbb R}
\def\H{\mathbb H}
\def\SS{\mathbb S}
\def\PW{\partial\mathcal{W}}
\def\f{\frac}
\def\pt{\partial}
\begin{document}
\title[Sharp upper bounds for the capacity]{Sharp upper bounds for the capacity in the hyperbolic and Euclidean spaces}
\author[H.~Li]{Haizhong~Li}
\address{Department of Mathematical Sciences, Tsinghua University, Beijing 100084, P.~R.~China}
\email{\href{mailto:lihz@tsinghua.edu.cn}{lihz@tsinghua.edu.cn}}
\author[R.~Li]{Ruixuan~Li}
\address{Department of Mathematical Sciences, Tsinghua University, Beijing 100084, P.~R.~China}
\email{\href{mailto:li-rx18@tsinghua.org.cn}{li-rx18@tsinghua.org.cn}}
\author[C.~Xiong]{Changwei~Xiong}
\address{School of Mathematics, Sichuan University, Chengdu 610065, Sichuan,  P.~R.~China}
\email{\href{mailto:changwei.xiong@scu.edu.cn}{changwei.xiong@scu.edu.cn}}
\date{\today}
\thanks{The first and second authors were partially supported by NSFC Grant no.~11831005. The third author was supported by National Key R and D Program of China 2021YFA1001800, NSFC Grant no.~12171334, and the funding (no.~1082204112549) from Sichuan University.}
\subjclass[2010]{{31B15}, {53C21}, {74G65}, {49Q10}}
\keywords{Hyperbolic space; Capacity; Inverse mean curvature flow; Modified Hawking mass}

\maketitle

\begin{abstract}
We derive various sharp upper bounds for the $p$-capacity of a smooth compact set $K$ in the hyperbolic space $\mathbb{H}^n$ and the Euclidean space $\mathbb{R}^n$. Firstly, using the inverse mean curvature flow, for the mean convex and star-shaped set $K$ in $\mathbb{H}^n$, we obtain sharp upper bounds for the $p$-capacity $\mathrm{Cap}_p(K)$ in three cases: (1) $n\geq 2$ and $p=2$, (2) $n=2$ and $p\geq 3$, (3) $n=3$ and $1<p\leq 3$; Using the unit-speed normal flow, we prove a sharp upper bound for $\mathrm{Cap}_p(K)$ of a convex set $K$ in $\H^n$ for $n\geq 2$ and $p>1$. Secondly, for the compact set $K$ in $\mathbb{R}^3$, using the weak inverse mean curvature flow, we get a sharp upper bound for the $p$-capacity ($1<p<3$) of the set $K$ with connected boundary; Using the inverse anisotropic mean curvature flow, we deduce a sharp upper bound for the anisotropic $p$-capacity ($1<p<3$) of an $F$-mean convex and star-shaped set $K$ in $\R^3$.
\end{abstract}

\section{Introduction}\label{sec1}

The capacity of compact sets in a Riemannian manifold is an important geometric quantity and admits many crucial applications in the topics such as parabolicity criteria, eigenvalue estimates, heat kernel estimates, etc. See, e.g., \cite{Gri99,Gri99a} for an overview of the applications. 

For the problem to estimate the $p$-capacity in the Euclidean space, the asymptotically flat Riemannian manifold and some other Riemannian manifolds (mainly modelled by the Euclidean space), there have been extensive investigations; see, e.g., the works \cite{AFM20,AM15,AM20,LX22,PS51,Xiao17,XY22,Xiao16,FS14,BM08,Maz11,HKM06,LXZ11,Xiao17a}. In contrast, we find there are few results in the hyperbolic space, partially because even for the model set, the geodesic ball $B_r\subset \H^n$, the $p$-capacity $\mathrm{Cap}_p(\overline{B_r})$ does not have a simple expression. See \cite{Jin22} for some limit results on the $p$-capacity of geodesic balls in an asymptotically hyperbolic manifold. To fill this gap in the literature, our first main focus in this paper is to derive various sharp upper bounds for the $p$-capacity of compact sets in the hyperbolic space.

Let $K\subset \H^n$ be a compact set in the hyperbolic space. For $p\geq 1$, define the $p$-capacity of $K$ as
\begin{equation*}
\mathrm{Cap}_p(K)=\inf \int_{\H^n} |\nabla f|^pdv,
\end{equation*}
where the infimum is taken over all smooth functions $f$ with compact support satisfying $f=1$ on $K$. For the $p$-capacity in a general Riemannian manifold, see, e.g., \cite{Gri99,Gri99a}.

Let $p>1$. Assume $U:=\H^n\setminus K$ is foliated by a family of hypersurfaces $M_t$ with $M_0=\pt K$. It has been derived in, e.g., \cite{Gri99} that
\begin{align}\label{eq-general-bound}
\mathrm{Cap}_p(K)\leq \left(\int_0^\infty T_p^{-1/(p-1)}(t)dt\right)^{1-p},
\end{align}
where the function $T_p(t)$ is defined as
\begin{equation*}
T_p(t)=\int_{M_t}|D\psi|^{p-1}d\mu_t,
\end{equation*}
and $\psi:\H^n\setminus K\rightarrow \R$ is the level-set function determined by the hypersurfaces $M_t$
\begin{equation*}
\psi(x)=t,\quad \text{when }x\in M_t.
\end{equation*}

In the first part of the paper, we will utilize the estimate \eqref{eq-general-bound} to get four kinds of upper bounds for the $p$-capacity of smooth compact sets $K$ in $\H^n$ in terms of certain geometric quantities of $K$ (see Theorems~\ref{thm1}--\ref{thm4}).

\begin{thm}\label{thm1}
Let $K\subset \H^n$ ($n\geq 2$) be a compact set with smooth, mean convex and star-shaped boundary. Then
\begin{align*}
&\qquad \qquad \mathrm{Cap}_2(K)\leq n(n-1)\times  \\
& \left(\int_0^\infty \left(e^{\frac{n-2}{n-1}t}\left(W_2(K)+\frac{n-2}{n(n-1)}\int_0^t e^{-\frac{n-2}{n-1}\tau}I(e^\tau |\pt K|)d\tau\right)+\frac{1}{n}I(e^t |\pt K|)\right)^{-1}dt\right)^{-1},
\end{align*}
where the second quermassintegral $W_2(K)$ is given by
\begin{align*}
W_2(K)&:=\frac{1}{n(n-1)}\int_{\pt K}\sigma_1d\mu-\frac{1}{n}|K|,
\end{align*}
with $\sigma_1$ being the mean curvature of the boundary $\pt K$ (the summation of principal curvatures of $\pt K$), and $I:\R^+\to \R^+$ is the isoperimetric function in $\H^n$, i.e., $I$ is defined by $|B_r|=I(|\pt B_r|)$ ($r>0$). In particular, when $n=2$, we get
\begin{align*}
\mathrm{Cap}_2(K)&\leq \left(\int_0^\infty (4\pi^2+e^{2t}|\pt K|^2)^{-1/2}dt\right)^{-1}=\frac{2\pi}{\mathrm{arsinh}( 2\pi/|\pt K|)}.
\end{align*}
Moreover, the equalities hold if and only if $K$ is a geodesic ball.
\end{thm}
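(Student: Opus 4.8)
The plan is to apply the general estimate \eqref{eq-general-bound} with the family $M_t$ taken to be the inverse mean curvature flow (IMCF) starting from $\partial K$, and with $p=2$. Under IMCF the level-set function $\psi$ satisfies $|D\psi| = \sigma_1$ (where $\sigma_1$ is the mean curvature of $M_t$), so that
\begin{equation*}
T_2(t)=\int_{M_t}|D\psi|\,d\mu_t=\int_{M_t}\sigma_1\,d\mu_t,
\end{equation*}
and the right-hand side of \eqref{eq-general-bound} becomes $\left(\int_0^\infty \bigl(\int_{M_t}\sigma_1\,d\mu_t\bigr)^{-1}dt\right)^{-1}$. The entire problem is thus reduced to bounding the total mean curvature $\int_{M_t}\sigma_1\,d\mu_t$ from below along the flow, in terms of $t$ and of the geometry of $\partial K$.

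The key steps are as follows. First I would recall the evolution equations along IMCF in $\mathbb{H}^n$: the area grows as $|M_t|=e^t|\partial K|$, and one computes $\frac{d}{dt}\int_{M_t}\sigma_1\,d\mu_t$. Integrating the Gauss equation / using the structure of quermassintegrals in $\mathbb{H}^n$, one gets that $\int_{M_t}\sigma_1\,d\mu_t$ is controlled below by a quantity built from $W_2(M_t)$ and $|M_t|$; specifically, since $M_t$ is star-shaped and mean convex (a property preserved by IMCF, by Gerhardt--Urbas-type results), the monotonicity of a suitable modified Hawking-type mass along IMCF gives a differential inequality for $W_2$, which integrates to the first term inside the outer bracket — the $e^{\frac{n-2}{n-1}t}$ factor together with the $W_2(K)$ initial value and the convolution integral $\int_0^t e^{-\frac{n-2}{n-1}\tau}I(e^\tau|\partial K|)\,d\tau$. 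The isoperimetric function $I$ enters because one replaces the genuine enclosed volume $|\Omega_t|$ by its isoperimetric lower bound $I(|M_t|)=I(e^t|\partial K|)$. Second, for the remaining $\frac{1}{n}I(e^t|\partial K|)$ term, I would use the Minkowski-type formula relating $\int_{M_t}\sigma_1$ to the volume, again estimated via the isoperimetric inequality. Assembling these, one arrives at
\begin{equation*}
\int_{M_t}\sigma_1\,d\mu_t \;\geq\; n(n-1)\Bigl(e^{\frac{n-2}{n-1}t}\bigl(W_2(K)+\tfrac{n-2}{n(n-1)}\textstyle\int_0^t e^{-\frac{n-2}{n-1}\tau}I(e^\tau|\partial K|)\,d\tau\bigr)+\tfrac1n I(e^t|\partial K|)\Bigr),
\end{equation*}
which, plugged into \eqref{eq-general-bound}, yields the stated bound. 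For the rigidity statement, equality throughout forces equality in the isoperimetric inequality at every time $t$, hence every $M_t$ is a geodesic sphere, and tracing back $M_0=\partial K$ is a geodesic sphere, i.e., $K$ is a geodesic ball; conversely one checks the bound is attained on geodesic balls by direct computation.

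For the case $n=2$ I would specialize: here $\sigma_1$ is the geodesic curvature of the curve $M_t\subset\mathbb{H}^2$, and by Gauss--Bonnet $\int_{M_t}\sigma_1\,d\mu_t = 2\pi + |\Omega_t|$ (area enclosed, with the $-1$ curvature contributing $+|\Omega_t|$), while the isoperimetric inequality in $\mathbb{H}^2$ gives $|\Omega_t|^2 + 4\pi|\Omega_t| \geq |M_t|^2 = e^{2t}|\partial K|^2$, equivalently $(2\pi+|\Omega_t|)^2 \geq 4\pi^2 + e^{2t}|\partial K|^2$. Hence $\int_{M_t}\sigma_1\,d\mu_t \geq (4\pi^2+e^{2t}|\partial K|^2)^{1/2}$, and \eqref{eq-general-bound} gives $\mathrm{Cap}_2(K)\leq\bigl(\int_0^\infty(4\pi^2+e^{2t}|\partial K|^2)^{-1/2}dt\bigr)^{-1}$. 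The elementary integral $\int_0^\infty(4\pi^2+e^{2t}L^2)^{-1/2}dt$ is evaluated by the substitution $u=e^t$, giving $\frac{1}{2\pi}\,\mathrm{arsinh}(2\pi/L)$, which inverts to the closed form $2\pi/\mathrm{arsinh}(2\pi/|\partial K|)$.

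The main obstacle I anticipate is establishing the lower bound for $\int_{M_t}\sigma_1\,d\mu_t$ in the general $n$ case with the precise constants shown: this requires knowing that mean convexity and star-shapedness persist along IMCF in $\mathbb{H}^n$ (so the flow exists for all time and stays smooth), setting up the correct monotone quantity whose integration produces exactly the $e^{\frac{n-2}{n-1}t}$ weight and the convolution term, and handling the subtlety that one only controls $|M_t|$ and not the enclosed volume directly — forcing the systematic use of the isoperimetric function $I$ and simultaneously explaining why equality is so rigid. The $n=2$ statement, by contrast, is essentially Gauss--Bonnet plus the hyperbolic isoperimetric inequality and should be routine once the IMCF machinery is in place.
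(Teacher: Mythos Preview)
Your overall framework is right --- use IMCF, note $T_2(t)=\int_{M_t}\sigma_1\,d\mu_t$, and plug into \eqref{eq-general-bound} --- but the direction of every key inequality is inverted. The estimate \eqref{eq-general-bound} with $p=2$ reads $\mathrm{Cap}_2(K)\leq\bigl(\int_0^\infty T_2(t)^{-1}\,dt\bigr)^{-1}$, and this right-hand side is \emph{monotone increasing} in $T_2$. So to bound the capacity from above you need an \emph{upper} bound on $\int_{M_t}\sigma_1\,d\mu_t$, not the lower bound you claim. Correspondingly, the isoperimetric inequality goes the other way: in $\mathbb{H}^n$ the ball \emph{maximizes} volume for given boundary area, i.e.\ $|\Omega_t|\leq I(|M_t|)$, not $\geq$. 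In the $n=2$ case the correct inequality is $|M_t|^2\geq |\Omega_t|^2+4\pi|\Omega_t|$, which rearranges to $(2\pi+|\Omega_t|)^2\leq 4\pi^2+e^{2t}|\partial K|^2$, hence $\int_{M_t}\sigma_1\,d\mu_t=2\pi+|\Omega_t|\leq (4\pi^2+e^{2t}|\partial K|^2)^{1/2}$. Your two sign errors accidentally cancel there, but the reasoning as written is wrong.

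For general $n$, the paper does not use a Hawking-type monotonicity. It computes the variation of $W_2(K_t)$ along IMCF via the quermassintegral variational formula, then applies the Newton--Maclaurin inequality $\sigma_2/\binom{n-1}{2}\leq(\sigma_1/(n-1))^2$ to obtain the differential inequality $\tfrac{d}{dt}W_2(K_t)\leq\tfrac{n-2}{n-1}W_2(K_t)+\tfrac{n-2}{n(n-1)}|K_t|$, uses $|K_t|\leq I(e^t|\partial K|)$, and integrates to get an \emph{upper} bound on $W_2(K_t)$; this combined with $\int_{M_t}\sigma_1=n(n-1)W_2(K_t)+(n-1)|K_t|$ and the isoperimetric bound again yields the displayed upper bound on $T_2(t)$. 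Once the signs are fixed your argument for $n=2$ and the rigidity discussion match the paper, but in the general case you should replace the vague ``Hawking-type mass'' step by the Newton inequality for $\sigma_2$.
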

\begin{thm}\label{thm2}
Let $K\subset \H^2$ be a compact set with smooth convex boundary $M=\pt K$ and $p\geq 3$. Then
\begin{align*}
\mathrm{Cap}_p(K)\leq \left(\int_0^\infty e^{\frac{p-2}{p-1}t}|M|^{\frac{p-2}{p-1}} \left(e^{2t}|M|^2+ \left(|M|^{p-2}T_p(0)\right)^{\frac{2}{p-1}}-|M|^2\right)^{-\frac{1}{2}}dt\right)^{1-p},
\end{align*}
where $T_p(0)=\int_M \sigma_1^{p-1}d\mu$. Moreover, the equality holds if and only if $K$ is a geodesic ball.
\end{thm}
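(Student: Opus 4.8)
\smallskip
\noindent\textbf{Proof strategy for Theorem~\ref{thm2}.}
The plan is to foliate $\H^2\setminus K$ by the inverse mean curvature flow (IMCF) $\pt_t X=H^{-1}\nu$ with $M_0=\pt K$, feed this foliation into \eqref{eq-general-bound}, and show that the resulting quantity $T_p(t)$ is dominated by the one produced by a concentric family of geodesic circles. Since $K$ is convex, in $\H^2$ the boundary $M_0$ has positive geodesic curvature $\kappa=\sigma_1$, and by the existence and regularity theory of IMCF in hyperbolic space the flow exists for all $t\in[0,\infty)$, remains smooth and strictly convex, and the leaves $M_t$ exhaust $\H^2\setminus K$. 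Along the flow $\pt_t(d\mu_t)=d\mu_t$, so $|M_t|=e^t|M|$, and the level-set function $\psi$ of the foliation satisfies $|D\psi|=\kappa$; hence $T_p(t)=\int_{M_t}\kappa^{p-1}\,d\mu_t$.

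The analytic heart of the argument is the evolution of $T_p$. Using the evolution equation of the geodesic curvature under IMCF in $\H^2$, namely $\pt_t\kappa=-(\kappa^{-1})_{ss}-(\kappa^2-1)\kappa^{-1}$ with $s$ arc length, together with $\pt_t(d\mu_t)=d\mu_t$ and integration by parts on the closed curve $M_t$, one obtains
\begin{equation*}
\frac{d}{dt}T_p(t)=-(p-1)(p-2)\int_{M_t}\kappa^{p-5}\kappa_s^2\,d\mu_t-(p-2)\int_{M_t}\kappa^{p-1}\,d\mu_t+(p-1)\int_{M_t}\kappa^{p-3}\,d\mu_t .
\end{equation*}
Next I would introduce the modified Hawking mass
\begin{equation*}
\Psi(t):=|M_t|^{\frac{2(p-2)}{p-1}}\,T_p(t)^{\frac{2}{p-1}}-|M_t|^2 ,
\end{equation*}
which equals the constant $4\pi^2$ along a family of geodesic circles and whose initial value is exactly $\Psi(0)=(|M|^{p-2}T_p(0))^{2/(p-1)}-|M|^2$. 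Differentiating $\Psi$, substituting the evolution of $T_p$ and $|M_t|=e^t|M|$, and clearing positive factors reduces the monotonicity $\Psi'(t)\le 0$ to
\begin{equation*}
-(p-2)\int_{M_t}\kappa^{p-5}\kappa_s^2\,d\mu_t+\int_{M_t}\kappa^{p-3}\,d\mu_t\le |M_t|^{\frac{2}{p-1}}\Big(\int_{M_t}\kappa^{p-1}\,d\mu_t\Big)^{\frac{p-3}{p-1}} .
\end{equation*}
This is where the restriction $p\ge 3$ is forced: it makes the gradient term on the left nonpositive (it vanishes when $p=3$), and it makes the conjugate exponents $\tfrac{p-1}{p-3}$ and $\tfrac{p-1}{2}$ admissible, so that the remaining inequality $\int_{M_t}\kappa^{p-3}\,d\mu_t\le |M_t|^{2/(p-1)}(\int_{M_t}\kappa^{p-1}\,d\mu_t)^{(p-3)/(p-1)}$ is precisely H\"older's inequality applied to $\kappa^{p-3}=\kappa^{p-3}\cdot 1$ (an equality when $p=3$). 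Hence $\Psi$ is non-increasing and $\Psi(t)\le\Psi(0)$ for all $t\ge 0$.

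The bound $\Psi(t)\le\Psi(0)$ is equivalent to $T_p(t)\le |M_t|^{-(p-2)}(|M_t|^2+\Psi(0))^{(p-1)/2}$. Since $x\mapsto x^{-1/(p-1)}$ and $x\mapsto x^{1-p}$ are decreasing, inserting this into \eqref{eq-general-bound} and simplifying with $|M_t|=e^t|M|$ and the identity $\Psi(0)=(|M|^{p-2}T_p(0))^{2/(p-1)}-|M|^2$ yields exactly the stated upper bound. For the rigidity, equality in the capacity estimate forces $\Psi'(t)\equiv 0$, hence equality in the above H\"older step for every $t$, which forces $\kappa$ to be constant on each $M_t$; a compact curve of constant geodesic curvature in $\H^2$ is a geodesic circle, so $\pt K$ is a geodesic circle and $K$ is a geodesic ball, and conversely a direct computation along the flow of concentric geodesic circles shows that a geodesic ball realizes equality.

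The main obstacles are: (i) justifying that IMCF starting from a convex curve stays smooth, strictly convex and complete for all time, so that the foliation and all the integrations by parts are legitimate, which rests on the IMCF theory in $\H^n$; and (ii) carrying out the evolution of $T_p$ and the ensuing algebra with the exponents exactly right, since it is precisely the sign of the gradient term $-(p-1)(p-2)\int_{M_t}\kappa^{p-5}\kappa_s^2\,d\mu_t$ that confines the method to $p\ge 3$.
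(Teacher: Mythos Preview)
Your proposal is correct and is essentially the paper's own argument: the paper likewise evolves $T_p(t)$ under IMCF, drops the nonpositive gradient term for $p\ge 3$, bounds $\int_{M_t}\sigma_1^{p-3}\,d\mu_t$ by H\"older, and obtains exactly the monotonicity of your quantity $\Psi(t)=(|M_t|^{p-2}T_p(t))^{2/(p-1)}-|M_t|^2$ before inserting the resulting bound on $T_p(t)$ into \eqref{eq-general-bound}. One small caveat on the rigidity clause: at $p=3$ the H\"older step is an identity, so the constancy of $\kappa$ must be read off from the vanishing of the gradient integral $\int_{M_t}\kappa^{-2}\kappa_s^2\,d\mu_t$ rather than from H\"older.
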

\begin{thm}\label{thm3}
Let $K\subset \H^3$ be a compact set with smooth, mean convex and star-shaped boundary $M=\pt K$ and $1<p\leq 3$. Then
\begin{align*}
&\mathrm{Cap}_p(K)\leq \\
& \left(\int_0^\infty\left(\left(\int_M \sigma_1^2d\mu-4|M|-16\pi\right)e^{-t}+4|M|e^{t}+16\pi\right)^{-\frac{1}{2}}e^{-\frac{3-p}{2(p-1)}t}dt\right)^{1-p}|M|^{\frac{3-p}{2}}.
\end{align*}
Moreover, the equality holds if and only if $K$ is a geodesic ball.
\end{thm}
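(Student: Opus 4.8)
The plan is to evolve $M_0=M=\partial K$ by the smooth inverse mean curvature flow (IMCF) $\partial_t x=H^{-1}\nu$ in $\H^3$. Because $K$ is mean convex and star-shaped, this flow exists for all $t\ge 0$, the leaves $M_t$ stay star-shaped and mean convex (so $H>0$ throughout), and they foliate $U=\H^3\setminus K$; I would quote the standard long-time existence and regularity theory for star-shaped IMCF in hyperbolic space for this. Along the flow the area satisfies $\frac{d}{dt}|M_t|=|M_t|$, hence $|M_t|=e^t|M|$, and the level-set function $\psi$ appearing in \eqref{eq-general-bound} satisfies $|D\psi|=H$, so that
\[
T_p(t)=\int_{M_t}H^{p-1}\,d\mu_t .
\]
Since $x\mapsto x^{-1/(p-1)}$ and $x\mapsto x^{1-p}$ are decreasing on $(0,\infty)$ for $p>1$, it suffices by \eqref{eq-general-bound} to establish a sharp upper bound for $T_p(t)$.

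The heart of the argument is a Geroch-type monotonicity for $f(t):=\int_{M_t}H^2\,d\mu_t$, and it is here that $n=3$ is essential. Combining the evolution equation $\partial_t H=H^{-2}\Delta H-2H^{-3}|\nabla H|^2-H^{-1}\big(|A|^2+\overline{\mathrm{Ric}}(\nu,\nu)\big)$ (with $\overline{\mathrm{Ric}}(\nu,\nu)=-2$ in $\H^3$) and $\partial_t(d\mu_t)=d\mu_t$ with one integration by parts, the pointwise identity $H^2-2|A|^2=-(\kappa_1-\kappa_2)^2$, the Gauss equation $K_{M_t}=\kappa_1\kappa_2-1$, and Gauss--Bonnet $\int_{M_t}K_{M_t}\,d\mu_t=4\pi$ (each $M_t$ being a topological $2$-sphere), I expect to be led to
\[
f'(t)=\int_{M_t}\Big(-\tfrac{2|\nabla H|^2}{H^2}-(\kappa_1-\kappa_2)^2+4\Big)d\mu_t\le -f(t)+8|M_t|+16\pi .
\]
Multiplying by $e^{t}$ and integrating over $[0,t]$, using $|M_t|=e^t|M|$ and $f(0)=\int_{M}\sigma_1^2\,d\mu$ (recall $\sigma_1=H$ on $M$), gives
\[
\int_{M_t}H^2\,d\mu_t\le\Big(\int_{M}\sigma_1^2\,d\mu-4|M|-16\pi\Big)e^{-t}+4|M|e^{t}+16\pi=:\Phi(t),
\]
and the coefficient of $e^{-t}$ equals $\int_{M}(\kappa_1-\kappa_2)^2\,d\mu\ge 0$, so $\Phi(t)>0$.

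Since $1<p\le 3$, i.e. $0<p-1\le 2$, Hölder's inequality with conjugate exponents $\frac{2}{p-1}$ and $\frac{2}{3-p}$ gives
\[
T_p(t)=\int_{M_t}H^{p-1}\,d\mu_t\le\Big(\int_{M_t}H^2\,d\mu_t\Big)^{\frac{p-1}{2}}|M_t|^{\frac{3-p}{2}}\le\Phi(t)^{\frac{p-1}{2}}\big(e^t|M|\big)^{\frac{3-p}{2}} .
\]
Substituting into \eqref{eq-general-bound}, pulling the power of $|M|$ out of the time integral and using that $x\mapsto x^{1-p}$ reverses inequalities for $p>1$, one obtains exactly
\[
\mathrm{Cap}_p(K)\le\left(\int_0^\infty\Phi(t)^{-\frac12}\,e^{-\frac{3-p}{2(p-1)}t}\,dt\right)^{1-p}|M|^{\frac{3-p}{2}},
\]
which is the asserted inequality. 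For the rigidity statement, if equality holds then Hölder's inequality must be an equality for a.e.\ $t$, forcing $H$ to be constant on each leaf $M_t$; by Alexandrov's theorem in $\H^3$ each $M_t$ is then a geodesic sphere, whence $K$ is a geodesic ball. Conversely, if $K$ is a geodesic ball the flow consists of concentric geodesic spheres and every inequality used above becomes an equality.

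The step I expect to be the main obstacle is the monotonicity computation: one must handle carefully the sign conventions in the evolution equation for $H$ and the ambient curvature term $\overline{\mathrm{Ric}}(\nu,\nu)=-2$, and it is precisely the appeal to Gauss--Bonnet that confines the argument to $n=3$, since in higher dimensions $\int_{M_t}|A|^2$ cannot be rewritten through $\int_{M_t}H^2$ and topological data. A secondary but necessary point is to invoke precisely the long-time existence, regularity and preservation of mean convexity for the IMCF of mean convex, star-shaped hypersurfaces in $\H^n$, so that \eqref{eq-general-bound} applies and the leaves exhaust $\H^3\setminus K$.
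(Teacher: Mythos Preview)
Your proposal is correct and follows essentially the same route as the paper: run IMCF from $M$, derive the Geroch-type differential inequality for $\int_{M_t}H^2\,d\mu_t$ via the Gauss equation and Gauss--Bonnet (this is exactly where $n=3$ enters), integrate it to get $\Phi(t)$, then apply H\"older and plug into \eqref{eq-general-bound}. The only small gap is in the rigidity at the endpoint $p=3$: there H\"older is vacuous, so instead of invoking the H\"older equality case you must use that equality in the final bound forces $\int_{M_t}H^2\,d\mu_t=\Phi(t)$, hence equality in your differential inequality, which gives $\nabla H\equiv 0$ on each $M_t$ and then Alexandrov as before.
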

\begin{thm}\label{thm4}
Let $K\subset \H^n$ ($n\geq 2$) be a compact set with smooth convex boundary $M=\pt K$ and $p>1$. Then
\begin{align*}
&\mathrm{Cap}_p(K)\leq \left(\int_0^\infty\left( \int_M \sum_{i=0}^{n-1}\cosh^{n-1-i}t\cdot \sinh^i t\cdot \sigma_id\mu\right)^{-\frac{1}{p-1}} dt\right)^{1-p} ,
\end{align*}
where $\sigma_i$ denotes the $i$th mean curvature of the boundary $M$. Moreover, the equality holds if $K$ is a geodesic ball.
\end{thm}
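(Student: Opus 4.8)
The plan is to apply the estimate~\eqref{eq-general-bound} to the foliation of $\H^n\setminus K$ by the \emph{unit-speed normal flow} of $M=\pt K$. Since $K$ is convex, the distance function $\psi(x):=\mathrm{dist}(x,K)$ is smooth on $\H^n\setminus K$, satisfies the eikonal equation $|D\psi|\equiv1$ there, and its level sets $M_t=\{\psi=t\}$ --- the equidistant hypersurfaces $x\mapsto\exp_x(t\nu(x))$, $x\in M$, with $\nu$ the outward unit normal --- foliate $\H^n\setminus K$ for every $t\geq0$, with $M_0=M$. Because $|D\psi|\equiv1$, the quantity $T_p$ appearing in~\eqref{eq-general-bound} collapses to $T_p(t)=\int_{M_t}|D\psi|^{p-1}\,d\mu_t=|M_t|$, the $(n-1)$-dimensional area of the equidistant hypersurface $M_t$.

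The only computation requiring care is the evolution of the area element along this flow. Parametrizing $M_t$ by the normal exponential map and computing the associated Jacobi fields in the space form of curvature $-1$ (equivalently, invoking the hyperbolic tube/Steiner formula), one obtains $d\mu_t=\prod_{i=1}^{n-1}(\cosh t+\kappa_i\sinh t)\,d\mu$, where $\kappa_1,\dots,\kappa_{n-1}$ are the principal curvatures of $M$ with respect to $\nu$. Convexity gives $\kappa_i\geq0$, so each factor stays positive and the flow remains smooth and embedded for all $t\geq0$; as a check, when $K=\overline{B_r}$ one has $\kappa_i\equiv\coth r$ and recovers $|M_t|=|\pt B_{r+t}|$. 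Expanding the product in the elementary symmetric functions of $(\kappa_1,\dots,\kappa_{n-1})$, which are exactly $\sigma_0\equiv1,\sigma_1,\dots,\sigma_{n-1}$, gives
\begin{equation*}
T_p(t)=|M_t|=\int_M\sum_{i=0}^{n-1}\cosh^{n-1-i}t\cdot\sinh^{i}t\cdot\sigma_i\,d\mu .
\end{equation*}
Since $|M_t|$ grows like $e^{(n-1)t}$ as $t\to\infty$, the integral $\int_0^\infty T_p(t)^{-1/(p-1)}\,dt$ is finite, and substituting the displayed formula into~\eqref{eq-general-bound} yields precisely the asserted inequality.

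For the equality statement, recall the mechanism behind~\eqref{eq-general-bound}: that bound comes from restricting the variational problem defining $\mathrm{Cap}_p(K)$ to test functions of the form $f=\phi(\psi)$ and optimizing over the one-variable profile $\phi$. Consequently equality holds whenever the genuine $p$-capacitary potential of $K$ is a function of $\psi$ alone, i.e. its level sets coincide with the $M_t$. When $K=\overline{B_r}$ is a geodesic ball, rotational symmetry forces the $p$-capacitary potential to be radial, so its level sets are the concentric geodesic spheres $M_t$, and equality follows; this is the \emph{if} direction claimed in the theorem, so no converse rigidity is needed. I do not anticipate a genuine obstacle here: once the unit-speed normal flow is chosen as the foliation, the proof reduces to the classical equidistant-area computation together with the construction underlying~\eqref{eq-general-bound}.
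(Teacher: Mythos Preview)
Your proposal is correct and follows essentially the same approach as the paper: foliate $\H^n\setminus K$ by the unit-speed normal flow, observe that $|D\psi|\equiv1$ collapses $T_p(t)$ to $|M_t|$, compute the equidistant area element as $\prod_i(\cosh t+\kappa_i\sinh t)\,d\mu$, expand in elementary symmetric functions, and plug into~\eqref{eq-general-bound}. The only cosmetic difference is that the paper derives the area formula via the hyperboloid model in $\R^{n,1}$ (writing $X(p,t)=\cosh t\cdot X(p,0)+\sinh t\cdot\nu(p,0)$ and differentiating), whereas you invoke the Jacobi-field/tube formula directly; for the equality case the paper simply states ``direct computation'' for the geodesic ball, while you spell out the radial-potential reasoning.
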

For the proofs of Theorems~\ref{thm1}, \ref{thm2}, and \ref{thm3}, we use the inverse mean curvature flow (see Gerhardt \cite{Ger11}) to generate the foliation $M_t$; while for the proof of Theorem~\ref{thm4}, we use the unit-speed normal flow to get the foliation $M_t$.

Next, we come to the second part of the paper, the cases in the Euclidean space $\R^n$. Different from in the hyperbolic space, in $\R^n$ we may consider the more general anisotropic $p$-capacity for a compact set. That is, given a Minkowski norm $F$ on $\R^n$, we define the anisotropic $p$-capacity ($1<p<n$) of a compact set $K\subset \R^n$ by
\begin{align*}
\mathrm{Cap}_{F,p}(K) = \inf \left\{ \int_{\R^n} F^p(Dv) dx: v \in C_c^{\infty}(\R^n), v \geq 1\text{ on }K \right\},
\end{align*}
where $C_c^\infty(\R^n)$ is the set of smooth functions with compact support in $\R^n$. See Section~\ref{sec2.3} for more details on the definitions. In particular, when $F$ is the Euclidean norm ($F(\xi)=|\xi|$), the anisotropic $p$-capacity reduces to the ordinary $p$-capacity in $\R^n$.

First we get a sharp upper bound for the (ordinary) $p$-capacity of a compact set $K\subset \R^3$ with smooth and connected boundary $\Sigma=\pt K$. In $\R^3$ we use $H$ for $\sigma_1$ to denote the mean curvature of a surface $\Sigma\subset \R^3$.
\begin{thm}\label{thm5}
Let $K\subset \R^3$ be a compact set with smooth connected boundary $\Sigma=\pt K$ and $1<p< 3$. Then we have two cases: (1) $\int_\Sigma H^2 d\mu=16\pi$, $\Sigma$ is a round sphere with radius $r_0>0$ and
\begin{equation*}
\mathrm{Cap}_p(K)=\left(\frac{3-p}{p-1}\right)^{p-1}4\pi r_0^{3-p};
\end{equation*}
(2) $\int_\Sigma H^2 d\mu>16\pi$ and
\begin{align*}
\mathrm{Cap}_{ p}(K) &<  \left( \frac{3-p}{p-1} \right)^{p-1} (4\pi)^{\frac{p-1}{2}} |\Sigma|^{\frac{3-p}{2}} s^{\frac{3-p}{2}}  \theta^{1-p},
\end{align*}
where
\begin{align*}
s&=\frac{\int_{\Sigma}H^2 d\mu}{16\pi}-1 \text{ and }
\theta = \int_{0}^{s^{\frac{3-p}{2(p-1)}}} \left(1 +  r^{\frac{2(p-1)}{3-p}}\right)^{-\frac{1}{2}}\,dr.
\end{align*}
\end{thm}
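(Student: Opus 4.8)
The plan is to run Huisken--Ilmanen's weak inverse mean curvature flow out of $\Sigma=\partial K$ and to feed the resulting (weak) foliation into the general estimate \eqref{eq-general-bound}. Let $u\ge 0$ be the proper level-set solution, so that the weak surfaces are $\Sigma_t=\partial\{u<t\}$, with $\{u<t\}\supseteq K$, $u\equiv 0$ on $K$, $u\to\infty$ at infinity, and $|\nabla u|=H$ (the weak mean curvature of $\Sigma_t$) for a.e.\ $t$; thus $T_p(t)=\int_{\Sigma_t}|\nabla u|^{p-1}\,d\mu=\int_{\Sigma_t}H^{p-1}\,d\mu$. First I would re-establish \eqref{eq-general-bound} directly in this weak setting: for a nonincreasing $\phi$ with $\phi(0)=1$ and compact support, $f=\phi(u)$ is admissible for $\mathrm{Cap}_p(K)$, and the coarea formula gives $\int_{\R^3}|\nabla f|^p\,dx=\int_0^\infty|\phi'(t)|^p\,T_p(t)\,dt$; optimising over $\phi$ (the extremal $\phi$ has $|\phi'|$ proportional to $T_p^{-1/(p-1)}$) yields
\[
\mathrm{Cap}_p(K)\ \le\ \Big(\int_0^\infty T_p(t)^{-1/(p-1)}\,dt\Big)^{1-p}.
\]

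The heart of the argument is an explicit upper bound for $T_p(t)$, which I would obtain by combining three facts about the weak flow in $\R^3$. (i) Exponential area growth: $|\Sigma_t|\le|\Sigma|\,e^{t}$. (ii) A Geroch-type monotonicity: since $\R^3$ is scalar flat and each $\Sigma_t$ is connected, so that $\chi(\Sigma_t)\le 2$, the Hawking-mass monotonicity sharpens into the differential inequality $\frac{d}{dt}\mathcal{W}(\Sigma_t)\le 16\pi-\mathcal{W}(\Sigma_t)$ for $\mathcal{W}(\Sigma_t):=\int_{\Sigma_t}H^2\,d\mu$; integrating,
\[
\mathcal{W}(\Sigma_t)\ \le\ 16\pi+e^{-t}\Big(\int_\Sigma H^2\,d\mu-16\pi\Big)\ =\ 16\pi\big(1+s\,e^{-t}\big),
\]
where $s\ge 0$ by the Willmore inequality (with equality only for a round sphere). (iii) Hölder's inequality, available precisely because $1<p<3$:
\[
T_p(t)=\int_{\Sigma_t}H^{p-1}\,d\mu\ \le\ \Big(\int_{\Sigma_t}H^2\,d\mu\Big)^{\frac{p-1}{2}}|\Sigma_t|^{\frac{3-p}{2}}.
\]
Combining (i)--(iii) gives
\[
T_p(t)\ \le\ |\Sigma|^{\frac{3-p}{2}}(16\pi)^{\frac{p-1}{2}}\,e^{\frac{3-p}{2}t}\big(1+s\,e^{-t}\big)^{\frac{p-1}{2}}.
\]

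Inserting this into the capacity bound and writing $\beta:=\frac{3-p}{2(p-1)}>0$,
\[
\int_0^\infty T_p(t)^{-1/(p-1)}\,dt\ \ge\ |\Sigma|^{-\beta}(16\pi)^{-1/2}\int_0^\infty e^{-\beta t}\big(1+s\,e^{-t}\big)^{-1/2}\,dt,
\]
and the substitution $r^{1/\beta}=s\,e^{-t}$ (equivalently $r=s^{\beta}e^{-\beta t}$) turns the last integral into $\frac{1}{\beta s^{\beta}}\int_0^{s^{\beta}}\big(1+r^{1/\beta}\big)^{-1/2}\,dr=\frac{\theta}{\beta s^{\beta}}$. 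Raising the resulting lower bound to the power $1-p$ and tidying the constants, using $\beta^{p-1}(16\pi)^{\frac{p-1}{2}}=\big(\frac{3-p}{p-1}\big)^{p-1}(4\pi)^{\frac{p-1}{2}}$ and $\beta(p-1)=\frac{3-p}{2}$, produces exactly the bound asserted in case (2). For strictness, equality throughout would force equality in Hölder ($H$ constant on each $\Sigma_t$) and in the Geroch monotonicity (each $\Sigma_t$ totally umbilic with $\nabla H\equiv 0$), so that $\Sigma=\partial K$ is a round sphere and $\int_\Sigma H^2\,d\mu=16\pi$, contradicting the hypothesis of case (2); hence the inequality is strict. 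Case (1) is separate and elementary: $\int_\Sigma H^2\,d\mu=16\pi$ forces, by the rigidity statement of the Willmore inequality, $\Sigma$ to be a round sphere of some radius $r_0$, i.e.\ $K=\overline{B_{r_0}}$, and the radial $p$-capacitary potential $f(x)=(|x|/r_0)^{-(3-p)/(p-1)}$ on $\{|x|\ge r_0\}$ yields $\mathrm{Cap}_p(K)=\big(\frac{3-p}{p-1}\big)^{p-1}4\pi\,r_0^{3-p}$ by direct integration.

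The step I expect to demand the most care is the rigorous handling of the weak flow: verifying \eqref{eq-general-bound} for the merely locally Lipschitz potential $u$ as above, accommodating a possible initial jump of $\partial K$ to its strictly outward minimising hull $\Sigma_0'$ — which only decreases $|\Sigma_0'|\le|\Sigma|$ and $\int_{\Sigma_0'}H^2\,d\mu\le\int_\Sigma H^2\,d\mu$, so estimates (i)--(iii) persist with the same (or smaller) $|\Sigma|$ and $s$ — and invoking the weak forms of the Geroch/Hawking monotonicity and of the connectedness of the level sets, both due to Huisken--Ilmanen. Granted these structural inputs, the rest is the Hölder interpolation and an elementary change of variables.
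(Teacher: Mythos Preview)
Your proposal is correct and follows essentially the same route as the paper: run Huisken--Ilmanen's weak inverse mean curvature flow, bound $T_p(t)=\int_{\Sigma_t}H^{p-1}\,d\mu$ via H\"older against $\int_{\Sigma_t}H^2\,d\mu$ and $|\Sigma_t|$, control the Willmore energy by the Geroch-type inequality (which the paper packages equivalently as monotonicity of the \emph{modified Hawking mass} $\widetilde m_H(\Sigma_t)=\frac{|\Sigma_t|}{16\pi}\big(1-\frac{1}{16\pi}\int_{\Sigma_t}H^2\,d\mu\big)$), handle the initial jump to the outward-minimising hull exactly as you indicate, and perform the same change of variables. The one place the paper is more careful than your sketch is the strictness in case~(2): rather than reading off equality in H\"older/Geroch directly on the weak surfaces, it first notes that equality forces the chosen test function $\bar f(\phi)$ to realise the capacity and hence to be $p$-harmonic, then uses the strong maximum principle and the Hopf lemma to rule out jumps and upgrade the weak flow to a smooth one, after which the rigidity conclusion follows.
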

\begin{rem}
For a closed surface $\Sigma\subset \R^3$, the Willmore energy $\int_\Sigma H^2d\mu$ satisfies $\int_\Sigma H^2d\mu \geq 16\pi$ with the equality if and only if $\Sigma$ is a round sphere. See, e.g., \cite{Che71,RS10,Wil68}.
\end{rem}
\begin{rem}
Theorem~\ref{thm5} with $p=2$ was proved by the third author in \cite{Xio22} in order to get sharp estimates for an exterior Steklov eigenvalue problem. Besides, Theorem~\ref{thm5} improves on partial results in \cite{BM08,Xiao16}.
\end{rem}

Second we obtain a sharp upper bound for the anisotropic $p$-capacity of compact sets in the Euclidean space $\R^3$. For the Wulff shape $\pt\mathcal{W}$ and anisotropic quantities in Theorem~\ref{thm6}, we refer to Section~\ref{sec2.3}.
\begin{thm}\label{thm6}
Let $K\subset \R^3$ be a compact set with smooth, anisotropically mean convex and star-shaped boundary $\Sigma=\pt K$ and $1<p< 3$. Then we have two cases: (1) $\int_{\Sigma} H_F^2\,d\mu_F=4|\PW|_F$, the surface $\Sigma$ is a translated scaled Wulff shape $r_0\pt \mathcal{W}+x_0$ ($r_0>0$, $x_0\in \R^3$) and
\begin{align*}
\mathrm{Cap}_{F, p}\left(K\right) = \left(\frac{3-p}{p-1}\right)^{p-1} |\PW|_F \; r_0^{3-p};
\end{align*}
(2) $\int_{\Sigma} H_F^2\,d\mu_F>4|\PW|_F$ and
\begin{align*}
\mathrm{Cap}_{F, p}(K) &< \left( \frac{3-p}{p-1} \right)^{p-1} |\PW|_F^{\frac{p-1}{2}} |\Sigma|_F^{\frac{3-p}{2}} s^{\frac{3-p}{2}}  \theta^{1-p},
\end{align*}
where
\begin{align*}
s=\frac{\int_{\Sigma} H_F^2\,d\mu_F}{4|\PW|_F}  - 1,\text{ and } \theta = \int_{0}^{s^{\frac{3-p}{2(p-1)}}} \left(1 +  r^{\frac{2(p-1)}{3-p}}\right)^{-\frac{1}{2}}\,dr.
\end{align*}
\end{thm}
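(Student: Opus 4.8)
The plan is to run the inverse anisotropic mean curvature flow (IAMCF) starting from $\Sigma = \partial K$, which is legitimate since $\Sigma$ is $F$-mean convex and star-shaped, and to feed the resulting foliation $\{M_t\}$ into an anisotropic version of the estimate \eqref{eq-general-bound}. Along IAMCF one has $\dot x = \nu_F / H_F$ (with $\nu_F$ the anisotropic normal), so $\frac{d}{dt}|\Sigma_t|_F = |\Sigma_t|_F$ and hence $|M_t|_F = |\Sigma|_F\,e^t$. The natural anisotropic analogue of $T_p(t)$ is $T_{F,p}(t) = \int_{M_t} H_F^{\,p-1}\,d\mu_F$ (since for the foliation the anisotropic level-set function $\psi$ satisfies $F(D\psi) = 1/H_F$), and the capacity bound reads $\mathrm{Cap}_{F,p}(K) \le \big(\int_0^\infty T_{F,p}(t)^{-1/(p-1)}\,dt\big)^{1-p}$. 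So everything reduces to controlling $\int_{M_t} H_F^{\,p-1}\,d\mu_F$ from below along the flow.

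First I would treat the borderline exponent $p-1 = 1$ conceptually: the anisotropic Gauss--Bonnet / Minkowski-type identity gives monotonicity of the anisotropic Willmore-type energy $\int_{M_t} H_F^2\,d\mu_F$ under IAMCF in $\R^3$ — this is the anisotropic counterpart of Geroch monotonicity, and it forces $\int_{M_t} H_F^2\,d\mu_F \ge 4|\PW|_F$ with equality exactly on (translated, scaled) Wulff shapes, which already yields case (1) by direct computation on $r_0\partial\mathcal W + x_0$. For case (2), the key step is to bound $T_{F,p}(t) = \int_{M_t} H_F^{\,p-1}\,d\mu_F$ below using this monotone quantity together with Hölder's inequality: when $1 < p < 3$ we have $0 < p-1 < 2$, so by Hölder
\begin{align*}
\int_{M_t} H_F^{\,p-1}\,d\mu_F \;\le\; \Big(\int_{M_t} H_F^2\,d\mu_F\Big)^{\frac{p-1}{2}} |M_t|_F^{\,\frac{3-p}{2}},
\end{align*}
which is the \emph{wrong} direction; instead I expect one uses Hölder the other way to bound the Willmore energy at time $t$ by $T_{F,p}(t)$ and $|M_t|_F$, i.e. $\int_{M_t} H_F^2\,d\mu_F \le T_{F,p}(t)^{\frac{2}{p-1}}\cdot\big(\text{power of }|M_t|_F\big)$ when $p-1 \le 2$ — wait, that requires the exponent on $H_F$ to dominate, valid precisely for $p \ge 3$, not here. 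The correct route, matching the shape of the stated bound, is: along IAMCF in $\R^3$ one shows the sharper evolution inequality $\frac{d}{dt}\int_{M_t} H_F^2\,d\mu_F \le \int_{M_t} H_F^2\,d\mu_F - 4|\PW|_F$ (anisotropic Geroch), whence $\int_{M_t} H_F^2\,d\mu_F - 4|\PW|_F \le \big(\int_\Sigma H_F^2\,d\mu_F - 4|\PW|_F\big)e^{t} = 4|\PW|_F\, s\, e^{t}$ is not quite it either — rather the inequality integrates to give an \emph{upper} control that then, combined with the \emph{lower} bound $\int_{M_t}H_F^2\,d\mu_F \ge 4|\PW|_F$, pins $T_{F,p}$.

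Concretely, I would proceed as follows. Step 1: establish $|M_t|_F = |\Sigma|_F e^{t}$ and the anisotropic Geroch monotonicity giving $Q(t) := \int_{M_t} H_F^2\,d\mu_F$ with $Q'(t) \le Q(t) - 4|\PW|_F$, hence $Q(t) \le 4|\PW|_F(1 + s e^{t})$ and $Q(t)\ge 4|\PW|_F$. Step 2: by Hölder with exponents $\tfrac{2}{p-1}$ and $\tfrac{2}{3-p}$ on $M_t$,
\begin{align*}
T_{F,p}(t) = \int_{M_t} H_F^{\,p-1}\,d\mu_F \;\le\; Q(t)^{\frac{p-1}{2}}\,|M_t|_F^{\frac{3-p}{2}} \;\le\; \big(4|\PW|_F\big)^{\frac{p-1}{2}}(1+se^{t})^{\frac{p-1}{2}}\,|\Sigma|_F^{\frac{3-p}{2}}\,e^{\frac{3-p}{2}t};
\end{align*}
this gives the right-hand side of the stated estimate once plugged into $\big(\int_0^\infty T_{F,p}^{-1/(p-1)}\big)^{1-p}$, after the substitution $r = s^{\frac{1}{p-1}}e^{\frac{?}{?}t}$ normalizing the integral to $\theta$; because \eqref{eq-general-bound} points the right way (larger $T_{F,p}$ upper bound $\Rightarrow$ the integral lower bound $\Rightarrow$ capacity upper bound, all consistent), the inequality in case (2) is strict unless $\Sigma$ is already a Wulff shape, which is excluded by $Q(0) > 4|\PW|_F$. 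Step 3: verify the substitution turns the $t$-integral into $\theta = \int_0^{s^{(3-p)/(2(p-1))}}(1+r^{2(p-1)/(3-p)})^{-1/2}\,dr$ and collect the constants to get $(\tfrac{3-p}{p-1})^{p-1}|\PW|_F^{(p-1)/2}|\Sigma|_F^{(3-p)/2}s^{(3-p)/2}\theta^{1-p}$.

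The main obstacle I anticipate is \textbf{Step 1}: proving the anisotropic Geroch-type monotonicity $Q'(t) \le Q(t) - 4|\PW|_F$ along IAMCF in $\R^3$. The Euclidean case rests on the Gauss--Bonnet theorem and the evolution of $H$ under IMCF; the anisotropic version requires the correct substitute — an anisotropic Gauss--Bonnet identity (using the anisotropic Gauss curvature and the fact that $\int_{\partial\mathcal W}$ of it equals $4\pi$ suitably normalized, giving the constant $4|\PW|_F$) together with the Simons-type evolution of $H_F$ and a Cauchy--Schwarz step controlling $|\mathring A_F|^2$ — and one must check that the star-shapedness is preserved so the smooth flow exists for all time (or pass to a weak formulation, though in $\R^3$ with star-shaped $F$-mean convex data the smooth IAMCF of \cite{Ger11}-type should suffice). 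Everything after that — the Hölder step and the change of variables — is routine. I would also remark that in case (1) one checks the stated closed form directly: on $r_0\partial\mathcal W$, $H_F \equiv 2/r_0$, $|\Sigma|_F = r_0^2|\PW|_F$, and the IAMCF is a pure rescaling, so $T_{F,p}(t) = (2/r_0)^{p-1}r_0^2 e^{t}|\PW|_F \cdot$const reproduces $\mathrm{Cap}_{F,p} = (\tfrac{3-p}{p-1})^{p-1}|\PW|_F r_0^{3-p}$.
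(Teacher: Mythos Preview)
Your overall strategy matches the paper's exactly: run IAMCF from $\Sigma$, bound $T_{F,p}(t)=\int_{M_t}H_F^{p-1}\,d\mu_F$ via H\"older by $Q(t)^{(p-1)/2}|M_t|_F^{(3-p)/2}$ with $Q(t):=\int_{M_t}H_F^2\,d\mu_F$, control $Q(t)$ by an anisotropic Geroch-type inequality, then change variables. The genuine gap is in your Step~1: the evolution inequality has the wrong sign. You assert $Q'(t)\le Q(t)-4|\partial\mathcal W|_F$, which integrates to $Q(t)\le 4|\partial\mathcal W|_F(1+se^{t})$. The correct inequality (which the paper proves via the evolution of $H_F$ under IAMCF and the anisotropic Gauss--Bonnet identity $\int_{\Sigma_t}K_F\,d\mu_F=|\partial\mathcal W|_F$) is
\[
Q'(t)\;\le\;4|\partial\mathcal W|_F - Q(t),
\]
equivalently the \emph{modified anisotropic Hawking mass} $m_H^F(\Sigma_t)=\dfrac{|\Sigma_t|_F}{4|\partial\mathcal W|_F}\Big(1-\dfrac{Q(t)}{4|\partial\mathcal W|_F}\Big)$ is non-decreasing along the flow. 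Integrating gives the much sharper bound $Q(t)\le 4|\partial\mathcal W|_F(1+se^{-t})$.

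The sign error is fatal for the stated conclusion. With your growth bound $(1+se^{t})$ the substitution in Step~3 cannot produce $\theta$: setting $se^{t}=r^{2(p-1)/(3-p)}$ sends $r$ from $s^{(3-p)/(2(p-1))}$ to $\infty$, not to $0$. More to the point, a larger upper bound on $T_{F,p}$ feeds through $\big(\int_0^\infty T_{F,p}^{-1/(p-1)}\,dt\big)^{1-p}$ (exponent $1-p<0$) as a \emph{larger}, hence weaker, upper bound on $\mathrm{Cap}_{F,p}$, so you would not recover the theorem's sharp inequality. Once you replace your inequality by $Q'\le 4|\partial\mathcal W|_F - Q$ and use the paper's substitution $se^{-t}=r^{2(p-1)/(3-p)}$, your Steps~2--3 go through verbatim; the strict inequality in case~(2) then follows because equality in the H\"older step forces $H_F$ constant on $\Sigma_t$, hence $\Sigma$ a translated scaled Wulff shape, contradicting $s>0$.
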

When $p=2$, we get the following corollary.
\begin{corr}\label{corr1}
Let $K\subset \R^3$ be a compact set with smooth, anisotropically mean convex and star-shaped boundary $\Sigma=\pt K$. Then we have two cases: (1) $\int_{\Sigma} H_F^2\,d\mu_F=4|\PW|_F$, the surface $\Sigma$ is a translated scaled Wulff shape $r_0\pt \mathcal{W}+x_0$ ($r_0>0$, $x_0\in \R^3$) and
\begin{align*}
\mathrm{Cap}_{F, 2}\left(K\right) =  |\PW|_F \; r_0;
\end{align*}
(2) $\int_{\Sigma} H_F^2\,d\mu_F>4|\PW|_F$ and
\begin{equation}\label{eq-new-bound}
\mathrm{Cap}_{F,2}( K) < \sqrt{|\PW|_F |\Sigma|_F} \frac{\sqrt{s}}{\mathrm{arsinh}\sqrt{s}},
\end{equation}
where
\begin{align*}
s=  \frac{\int_{\Sigma} H_F^2\,d\mu_F}{4|\PW|_F}-1.
\end{align*}
\end{corr}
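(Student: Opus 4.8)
The plan is to derive Corollary~\ref{corr1} directly from Theorem~\ref{thm6} by specializing to the exponent $p=2$ and simplifying the resulting expressions. First I would treat case (1): setting $p=2$ in the formula
$\mathrm{Cap}_{F,p}(K)=\left(\frac{3-p}{p-1}\right)^{p-1}|\PW|_F\,r_0^{3-p}$
gives $\left(\frac{1}{1}\right)^{1}|\PW|_F\,r_0^{1}=|\PW|_F\,r_0$, which is exactly the claimed identity, and the characterization of $\Sigma$ as a translated scaled Wulff shape is inherited verbatim from Theorem~\ref{thm6}. The only arithmetic to check here is that the exponents $3-p$ and $p-1$ both equal $1$ when $p=2$, so there is nothing subtle.

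Next I would handle case (2). With $p=2$, the prefactor $\left(\frac{3-p}{p-1}\right)^{p-1}$ becomes $1$, the factor $|\PW|_F^{\frac{p-1}{2}}$ becomes $|\PW|_F^{1/2}$, and $|\Sigma|_F^{\frac{3-p}{2}}$ becomes $|\Sigma|_F^{1/2}$, so the bound from Theorem~\ref{thm6} reads $\sqrt{|\PW|_F\,|\Sigma|_F}\cdot s^{1/2}\cdot\theta^{-1}$, and the quantity $s$ is unchanged since its definition does not involve $p$. It therefore remains only to evaluate the integral
$\theta=\int_0^{s^{\frac{3-p}{2(p-1)}}}\bigl(1+r^{\frac{2(p-1)}{3-p}}\bigr)^{-1/2}\,dr$
at $p=2$. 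Here the exponent $\frac{3-p}{2(p-1)}$ becomes $\frac12$, the exponent $\frac{2(p-1)}{3-p}$ becomes $2$, so $\theta=\int_0^{\sqrt{s}}(1+r^2)^{-1/2}\,dr=\mathrm{arsinh}(r)\big|_0^{\sqrt s}=\mathrm{arsinh}\sqrt{s}$. Substituting this back yields exactly \eqref{eq-new-bound}. The strictness of the inequality is again inherited from Theorem~\ref{thm6}.

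There is essentially no obstacle: the corollary is a pure specialization, and the only genuine computation is the elementary antiderivative $\int(1+r^2)^{-1/2}\,dr=\mathrm{arsinh}(r)$. If one wanted to be thorough I would also remark that the hypotheses of Corollary~\ref{corr1} are literally those of Theorem~\ref{thm6} with the constraint $1<p<3$ satisfied by $p=2$, so no extra regularity or convexity assumptions are needed. I would keep the write-up to two or three sentences, simply stating that the claim follows by putting $p=2$ in Theorem~\ref{thm6} and using $\int_0^{\sqrt s}(1+r^2)^{-1/2}\,dr=\mathrm{arsinh}\sqrt s$.
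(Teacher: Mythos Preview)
Your proposal is correct and matches the paper's own proof essentially line for line: the paper also simply sets $p=2$ in Theorem~\ref{thm6}, notes that the prefactors collapse to $\sqrt{|\PW|_F|\Sigma|_F}\,s^{1/2}\theta^{-1}$, and evaluates $\theta=\int_0^{\sqrt{s}}(1+r^2)^{-1/2}\,dr=\mathrm{arsinh}\sqrt{s}$.
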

\begin{rem}
For the isotropic case (i.e., $F$ is the Euclidean norm $F(\xi)=|\xi|$), Theorem~\ref{thm6} is contained in Theorem~\ref{thm5}. In addition, Theorem~\ref{thm6} and Corollary~\ref{corr1} improve on the corresponding results in \cite{LX22}. For example, under the same conditions, the bound (1.7) in \cite{LX22} reads (if $\Sigma$ is not a Wulff shape)
\begin{align}\label{eq-old-bound}
\mathrm{Cap}_{F,2}\left(K\right) < \frac{1}{2} \sqrt{|\mathcal{\pt W}|_F |\Sigma|_F} \left(1 + \sqrt{\frac{1}{4 |\mathcal{\pt W}|_F} \int_{\Sigma} H_F^2\,d\mu_F}\right).
\end{align}
By an elementary computation, we see that the bound \eqref{eq-new-bound} is better than the bound \eqref{eq-old-bound}.
\end{rem}
For the proof of Theorem~\ref{thm5}, we use the weak inverse mean curvature flow due to Huisken and Ilmanen \cite{HI01}; while for that of Theorem~\ref{thm6}, we employ the inverse anisotropic mean curvature flow (see Xia \cite{Xia17}). Under these flows we can foliate the domain $\R^n\setminus K$ and apply the classical approach as in \cite{PS51,Sze31}. This approach has also been applied in other works, and the most relevant papers to Theorems~\ref{thm5} and \ref{thm6} are \cite{BM08,Xiao16,LX22,Xio22} as mentioned before.

The paper is organized as follows. In Section~\ref{sec2} we provide some preliminaries and backgrounds, mainly on the inverse mean curvature flow in the hyperbolic space, the weak inverse mean curvature flow in the Euclidean space, the anisotropic geometry in the Euclidean space and the related inverse anisotropic mean curvature flow. In Section~\ref{sec3} we employ the inverse mean curvature flow in $\H^n$ to prove Theorems~\ref{thm1}, \ref{thm2}, and \ref{thm3}. In the next Section~\ref{sec4} we use the unit-speed normal flow to prove Theorem~\ref{thm4}. In Sections~\ref{sec5} and \ref{sec6} we discuss the problems in $\R^3$. In Section~\ref{sec5} we use the weak inverse mean curvature flow to prove Theorem~\ref{thm5}. In Section~\ref{sec6} we apply the inverse anisotropic mean curvature flow to prove Theorem~\ref{thm6}.

\section{Preliminaries}\label{sec2}

In this section we recall some backgrounds and tools which will be used later in the paper. Throughout the paper, $|\cdot|$ denotes the volume of compact sets with non-empty interior or the area of hypersurfaces, $\sigma_i$ ($1\leq i\leq n-1$) denotes the $i$th mean curvature of a hypersurface $M^{n-1}$ in an $n$-dimensional Riemannian manifold (occasionally, we will use $H$ for $\sigma_1$), and other standard notations will also be used without further indication whenever no confusion arises.

\subsection{The quermassintegrals and the inverse mean curvature flow in $\mathbb{H}^n$}

For a compact set $K\subset \H^n$ in the hyperbolic space, the quermassintegrals are important geometric quantities. In this paper we shall use the first three of them as follows:
\begin{align*}
W_0(K)&:=|K|,\\
W_1(K)&:=\frac{1}{n}|\pt K|,\\
W_2(K)&:=\frac{1}{n(n-1)}\int_{\pt K}\sigma_1d\mu-\frac{1}{n}|K|.
\end{align*}
See, e.g., \cite{WX14} for more details on the quermassintegrals.

Next we briefly introduce the inverse mean curvature flow in the hyperbolic space due to C.~Gerhardt \cite{Ger11}. Consider the inverse mean curvature flow for a mean convex hypersurface in $\H^n$ ($n\geq 2$), i.e., the parabolic evolution equation
\begin{align}\label{eq-IMCF-h}
\partial_t X = \frac{1}{\sigma_1} \nu,
\end{align}
where $X:N\times [0,T)\to \H^n$ is a smooth family of embeddings from a closed manifold $N$ to $\H^n$ and $\nu$ denotes the outward unit normal vector on the flow hypersurface.

Given an initial mean convex and star-shaped hypersurface $M_0=X(N,0)$, Gerhardt \cite{Ger11} proved the long-time existence and the smooth and exponential convergence of the flow \eqref{eq-IMCF-h} as follows.
\begin{thm}[Gerhardt \cite{Ger11}]
The flow \eqref{eq-IMCF-h} with an initial mean convex and star-shaped hypersurface $M_0=X(N,0)$ exists for all time $t\in [0,\infty)$. The flow hypersurfaces $M_t$ converge to infinity, and become strictly convex exponentially fast and also more and more totally umbilic. In fact there holds
\begin{equation*}
|h^i_j-\delta^i_j|\leq ce^{-t/(n-1)},
\end{equation*}
i.e., the principal curvatures are uniformly bounded and converge exponentially fast to $1$.
\end{thm}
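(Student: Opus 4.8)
The plan is to turn the geometric evolution \eqref{eq-IMCF-h} into a scalar quasilinear parabolic PDE for a radial graph function, to prove a priori $C^0$, $C^1$ and $C^2$ estimates (with the $C^2$ estimate carrying the convexity and the umbilicity), and then to conclude long-time existence by standard parabolic theory, reading off the asymptotics directly from the estimates. Concretely, fix geodesic polar coordinates centred at a point about which $M_0$ is star-shaped, so that the hyperbolic metric is $\bar g=dr^2+\sinh^2 r\,g_{\SS^{n-1}}$, and write $M_t=\{(u(\xi,t),\xi):\xi\in\SS^{n-1}\}$. Then \eqref{eq-IMCF-h} is equivalent to
\begin{equation*}
\pt_t u=\frac{v}{\sigma_1},\qquad v=\sqrt{1+\sinh^{-2}\!u\,|\n u|_{\SS^{n-1}}^2},
\end{equation*}
where $\sigma_1=\sigma_1[u]$ is a second-order quasilinear elliptic operator in $u$; it remains to solve this equation globally with the stated behaviour.

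For the $C^0$ estimate I would use geodesic spheres as barriers: a sphere of radius $\rho(t)$ with $\dot\rho=\tanh\rho/(n-1)$ is an exact solution of \eqref{eq-IMCF-h}, and it satisfies $\sinh\rho(t)=\sinh\rho(0)\,e^{t/(n-1)}$, hence $\rho(t)\to\infty$. Enclosing $M_0$ between two such spheres, the avoidance principle for the (expanding) flow gives $\rho_-(t)\le u(\cdot,t)\le\rho_+(t)$, so $u=\tfrac{t}{n-1}+O(1)$; in particular $M_t$ escapes to infinity. For the $C^1$ estimate I would derive the evolution equation satisfied by $v$ (equivalently by $|\n u|^2/\sinh^2 u$) and apply the maximum principle; the ambient curvature $-1$ enters this equation with a favourable sign, which keeps $v$ bounded --- so the flow hypersurfaces remain star-shaped graphs --- and in fact drives $v\to1$ exponentially, so the gradient decays along the flow.

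For the $C^2$ estimate I would first check that mean convexity is preserved and that, together with the $C^0$ and $C^1$ bounds, $\sigma_1$ stays between two positive constants on every finite time interval, so the equation is uniformly parabolic there. Then I would analyse the full shape operator $h^i_j$: since geodesic spheres have principal curvatures $\coth r\to1$ at infinity, the natural quantity is the deviation $h^i_j-\delta^i_j$, and its Simons-type evolution equation acquires a damping reaction term from the negative ambient curvature. A maximum-principle argument applied to the largest and smallest principal curvatures should then trap them and yield $|h^i_j-\delta^i_j|\le c e^{-t/(n-1)}$; equivalently, the trace-free second fundamental form decays exponentially while $\sigma_1/(n-1)\to1$. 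This is the step I expect to be the main obstacle: in contrast with Euclidean inverse mean curvature flow, where convexity need not be preserved and roundness appears only after rescaling, here one must genuinely exploit the $-1$ curvature term in the evolution of the second fundamental form, while simultaneously feeding in the $C^0$ and $C^1$ decay, to make the maximum principle control the oscillation of the principal curvatures; and since the hypersurfaces run off to infinity, some care is needed so that the maximum principle applies to the relevant bounded quantities.

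Finally, with uniform $C^2$ bounds in hand, Krylov--Safonov and Schauder estimates upgrade them to uniform $C^k$ bounds on compact time intervals, so the solution continues past every finite $T$ and hence exists for all $t\in[0,\infty)$. The strict convexity of $M_t$ for large $t$ and the exponential convergence of its principal curvatures to $1$ are then immediate from the $C^2$ decay estimate. (One may also note that Gerhardt \cite{Ger11} carries this out for a general class of inverse curvature functions $F$; the case $F=\sigma_1$ used here is a particular instance.)
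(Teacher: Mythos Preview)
The paper does not prove this theorem at all: it is stated purely as a background result quoted from Gerhardt~\cite{Ger11}, with no proof or sketch of proof given. So there is nothing in the paper to compare your proposal against.

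That said, your outline is a faithful summary of the strategy Gerhardt actually uses in~\cite{Ger11}: reduction to a scalar parabolic equation for the radial graph function, barrier arguments with geodesic spheres for the $C^0$ estimate, maximum principle for the gradient quantity $v$, preservation of mean convexity and two-sided bounds on $\sigma_1$, and then a Simons-type evolution equation for the shape operator exploiting the $-1$ ambient curvature to obtain the exponential decay of $|h^i_j-\delta^i_j|$. Your identification of the $C^2$ step as the delicate one is correct; Gerhardt handles it by working with the conformally equivalent Euclidean picture and carefully tracking the correction terms coming from the conformal factor. If you intend this as a self-contained proof rather than a sketch, the main gap is that the crucial maximum-principle argument for the second fundamental form is only asserted, not carried out; but as a plan it is accurate.
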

In addition, we need the evolution equations for the area element $d\mu_t$ and the mean curvature $\sigma_1$, which is standard and may be found in, e.g., Proposition~3 in \cite{LWX14}.
\begin{prop}
Along the flow \eqref{eq-IMCF-h}, there holds
\begin{align*}
&\frac{\pt }{\pt t}d\mu_t=d\mu_t,\\
&\pt_t \sigma_1=-\Delta \frac{1}{\sigma_1}-\frac{1}{\sigma_1}|h|^2+\frac{n-1}{\sigma_1},
\end{align*}
where $\Delta$ is the Laplacian on the flow hypersurface $M_t$ and $|h|^2$ denotes the squared norm of the second fundamental form of $M_t$. In particular, $|M_t|=e^t|M|$.
\end{prop}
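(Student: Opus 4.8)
The plan is to obtain both identities as specializations of the standard first-variation formulas for a hypersurface moving with normal speed $f\nu$, and then to integrate the area evolution. Recall that if a closed hypersurface evolves by $\pt_t X = f\nu$ with $\nu$ the outward unit normal, then the induced metric, the area element and the mean curvature satisfy
\begin{align*}
\pt_t g_{ij} = 2f h_{ij}, \qquad \pt_t d\mu_t = fH\, d\mu_t, \qquad \pt_t H = -\Delta f - \left(|h|^2 + \mathrm{Ric}(\nu,\nu)\right)f,
\end{align*}
where $H=\sigma_1$, $h_{ij}$ is the second fundamental form, $|h|^2 = h_{ij}h^{ij}$, and $\mathrm{Ric}$ is the Ricci curvature of the ambient space. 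The formula for $d\mu_t$ follows from $\pt_t\sqrt{\det g} = \tfrac12 \sqrt{\det g}\, g^{ij}\pt_t g_{ij}$, and the formula for $H$ follows by differentiating $H=g^{ij}h_{ij}$, using $\pt_t g^{ij}=-2fh^{ij}$ together with the evolution of $h_{ij}$, in which the ambient curvature enters through a Riccati-type term; both are classical and, in the form needed here, are recorded in Proposition~3 of \cite{LWX14}.

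Next I specialize to \eqref{eq-IMCF-h}, i.e., $f=1/\sigma_1$. Since $fH = 1$, the area evolution becomes $\pt_t d\mu_t = d\mu_t$. As the parametrizing manifold $N$ is fixed, integrating over $N$ gives $\tfrac{d}{dt}|M_t| = |M_t|$, hence $|M_t| = e^t|M_0| = e^t|\pt K|$. For the mean curvature, $\H^n$ has constant sectional curvature $-1$, so $\mathrm{Ric}(\nu,\nu) = -(n-1)$ along the flow; substituting $f=1/\sigma_1$ and this value into the general formula yields
\begin{align*}
\pt_t\sigma_1 = -\Delta\frac{1}{\sigma_1} - \left(|h|^2 - (n-1)\right)\frac{1}{\sigma_1} = -\Delta\frac{1}{\sigma_1} - \frac{|h|^2}{\sigma_1} + \frac{n-1}{\sigma_1},
\end{align*}
which is precisely the asserted identity.

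The computation is entirely routine once the sign conventions are fixed consistently: the outward orientation of $\nu$, the sign of $h_{ij}$ for which geodesic spheres are mean convex (so that $\sigma_1>0$ on the initial hypersurface and the flow \eqref{eq-IMCF-h} is well posed), and the sign of $\Delta$. The only point requiring genuine care is the derivation of the general evolution formula for $H$, where the ambient curvature term must be tracked correctly; since this is standard and we need it only in the constant-curvature setting, I would simply invoke the cited reference and perform the one-line substitution above. As a consistency check one may verify the formula on a geodesic sphere of radius $r$, where $\sigma_1 = (n-1)\coth r$, $|h|^2 = (n-1)\coth^2 r$ and $\dot r = \tanh r/(n-1)$, both sides then equaling $-1/(\sinh r\cosh r)$.
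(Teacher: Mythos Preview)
Your proposal is correct and is precisely the standard derivation. The paper does not prove this proposition at all; it merely states the formulas and cites Proposition~3 in \cite{LWX14}, so your argument in fact supplies what the paper omits, via exactly the route one would expect from that reference.
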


\subsection{The weak inverse mean curvature flow in $\mathbb{R}^3$}

In this subsection, we introduce the weak inverse mean curvature flow for the compact set $K\subset \R^3$ with smooth and connected boundary $\Sigma=\pt K$, which is due to Huisken and Ilmanen \cite{HI01}. By \cite{HI01}, there exists a proper, Lipschitz function $\phi\geq 0$ on $\overline{U}$ (here $U:=\R^3\setminus K$), called the solution to the weak inverse mean curvature flow with the initial surface $\Sigma$, satisfying the following properties:
\begin{enumerate}
  \item The function $\phi$ takes value $\phi|_\Sigma=0$ and $\lim_{x\to \infty} \phi(x)=\infty$. For $t>0$, the sets $\Sigma_t=\pt\{\phi\geq t\}$ and $\Sigma_t'=\pt \{\phi >t\}$ define two increasing families of $C^{1,\alpha}$ surfaces.
  \item For $t>0$, the surfaces $\Sigma_t$ ($\Sigma_t'$, resp.) minimize (strictly minimize, resp.) area among surfaces homologous to $\Sigma_t$ in the region $\{\phi\geq t\}$. The surface $\Sigma'=\pt \{\phi>0\}$ strictly minimizes area among surfaces homologous to $\Sigma$ in $U$.
  \item For almost all $t>0$, the weak mean curvature $H$ of $\Sigma_t$ is well defined and equals $|\nabla \phi|$, which is positive for almost all $x\in \Sigma_t$.
  \item For each $t>0$, the area $|\Sigma_t|=e^t|\Sigma'|$; and $|\Sigma_t|=e^t|\Sigma|$ if $\Sigma$ is outer-minimizing (i.e., $\Sigma$ minimizes area among all surfaces homologous to $\Sigma$ in $U$).
  \item All the surfaces $\Sigma_t$ ($t>0$) remain connected. The Hawking mass
  \begin{equation}
  m_H(\Sigma_t)=\sqrt{\frac{|\Sigma_t|}{16\pi}}\left(1-\frac{1}{16\pi}\int_{\Sigma_t}H^2d\mu_t\right)
  \end{equation}
  satisfies $\lim_{t\to 0+}m_H(\Sigma_t)\geq m_H(\Sigma')$ and its right-lower derivative satisfies (see (5.24) in \cite{HI01}; but note the misprints on some coefficients there, and the correct coefficients are as in (5.22) in \cite{HI01})
  \begin{align*}
  &\underline{D}_+m_H(\Sigma_t):=\liminf_{s\to t+}\frac{m_H(\Sigma_s)-m_H(\Sigma_t)}{s-t}\\
  &\geq \sqrt{\frac{|\Sigma_t|}{16\pi}}\frac{1}{16\pi}\left(8\pi -4\pi \chi(\Sigma_t)+\int_{\Sigma_t}\left(2|\nabla \log H|^2+\frac{1}{2}(\lambda_1-\lambda_2)^2\right)d\mu_t\right),
  \end{align*}
  where $\lambda_1$ and $\lambda_2$ are the weak principal curvatures of $\Sigma_t$ and $\chi(\Sigma_t)$ is the Euler characteristic of $\Sigma_t$. See \cite[Section~5]{HI01} for more details.
\end{enumerate}

Furthermore, in \cite{Xio22}, the third author defines a modified Hawking mass
\begin{equation}
\widetilde{m}_H(\Sigma_t):=\sqrt{\frac{|\Sigma_t|}{16\pi}}m_H(\Sigma_t)=\frac{|\Sigma_t|}{16\pi}\left(1-\frac{1}{16\pi}\int_{\Sigma_t}H^2d\mu_t\right),
\end{equation}
and finds $\lim_{t\to 0+}\widetilde{m}_H(\Sigma_t)\geq \widetilde{m}_H(\Sigma')$ and
\begin{align*}
&\underline{D}_+\widetilde{m}_H(\Sigma_t)
=\sqrt{\frac{|\Sigma_t|}{16\pi}}\frac{1}{2}m_H(\Sigma_t)+\sqrt{\frac{|\Sigma_t|}{16\pi}}\underline{D}_+m_H(\Sigma_t)\\
&\geq \frac{|\Sigma_t|}{(16\pi)^2}\left(\left(8\pi-\frac{1}{2}\int_{\Sigma_t}H^2d\mu_t\right)\right.\\
&\left.\quad +8\pi -4\pi \chi(\Sigma_t)+\int_{\Sigma_t}\left(2|\nabla \log H|^2+\frac{1}{2}(\lambda_1-\lambda_2)^2\right)d\mu_t\right)\\
&= \frac{|\Sigma_t|}{(16\pi)^2}\left(16\pi -4\pi \chi(\Sigma_t)+\int_{\Sigma_t}(2|\nabla \log H|^2-2\lambda_1 \lambda_2)d\mu_t\right)\\
&= \frac{|\Sigma_t|}{(16\pi)^2}\left(16\pi -8\pi \chi(\Sigma_t)+\int_{\Sigma_t}2|\nabla \log H|^2d\mu_t\right)\\
&\geq 0,
\end{align*}
where the last equality holds because of the weak Gauss--Bonnet formula (Page~403 in \cite{HI01}) and the last inequality is due to the fact that the surfaces $\Sigma_t$ remain connected.
\begin{rem}
The introduction of the modified Hawking mass is inspired by the work \cite{HW15}.
\end{rem}

\subsection{The anisotropic $p$-capacity and the inverse anisotropic mean curvature flow in $\mathbb{R}^3$}\label{sec2.3}
For the anisotropic $p$-capacity nice references include \cite[Section~2.2]{Maz11} and \cite[Chapters~2 and 5]{HKM06}. First we introduce the Minkowski norm on $\R^n$.
\begin{defn}
A function $F \in C^{\infty}(\mathbb{R}^{n} \setminus \{0\})\cap C(\R^n)$ is called a \emph{Minkowski norm} if
\begin{enumerate}
  \item $F$ is a convex, even, $1$-homogeneous function, and $F(\xi) > 0$ if $\xi\neq 0$;
  \item $F$ satisfies the uniformly elliptic condition, i.e., $\mathrm{Hess}_{\R^{n}}(F^2)$ is positive definite in $\R^{n}\setminus \{0\}$.
\end{enumerate}
\end{defn}

Let $K\subset \R^n$ be a compact set. For $n\geq 2$ and $1<p<n$, the \emph{anisotropic $p$-capacity} of $K$ is defined as
\begin{align*}
\mathrm{Cap}_{F,p}(K) = \inf \left\{ \int_{\R^n} F^p(Dv) dx: v \in C_c^{\infty}(\R^n), v \geq 1\text{ on }K \right\},
\end{align*}
where $C_c^\infty(\R^n)$ is the set of smooth functions with compact support in $\R^n$.

Next we review the anisotropic geometry of hypersurfaces in the Euclidean space which is classical in the differential geometry. 

Let $F$ be a Minkowski norm on $\R^n$. We can define its dual norm $F^0$ as follows.
\begin{defn}
The \emph{dual norm} $F^0$ of $F$ is defined as
\begin{equation*}
F^0(x) = \sup_{\xi\neq 0} \f{\langle \xi, x \rangle}{F(\xi)}.
\end{equation*}
\end{defn}
It is known that $F^0$ is also a Minkowski norm.

Recall that $F$ and $F^0$ satisfy the following properties, which are very useful when we want to understand the relationship between the unit normal $\nu$ and the anisotropic unit normal $\nu_F$ of a hypersurface below.
\begin{prop}
\begin{enumerate}
  \item $F(DF^0(x)) = 1$, $F^0(DF(\xi)) = 1$.
  \item $F^0(x) DF(DF^0(x)) = x$, $F(\xi) DF^0(DF(\xi)) = \xi$.
\end{enumerate}
\end{prop}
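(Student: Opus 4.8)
The plan is to derive (1) and (2) as the standard duality identities relating a Minkowski norm to its dual, using only $1$-homogeneity, Euler's relation, and the (strict) convexity encoded in the uniform ellipticity hypothesis. First I would rewrite the dual norm as a support function: by $1$-homogeneity of $F$,
\begin{equation*}
F^0(x)=\sup_{F(\xi)\le 1}\langle \xi,x\rangle ,
\end{equation*}
and since $\{F\le 1\}$ is convex and compact with the origin in its interior, for every $x\neq 0$ the supremum is attained at some $\xi^\ast=\xi^\ast(x)$ with $F(\xi^\ast)=1$. Because $\mathrm{Hess}(F^2)>0$ off the origin, $\{F=1\}$ is strictly convex, so $\xi^\ast(x)$ is unique, and the implicit function theorem applied to the Lagrange system shows $\xi^\ast$ depends smoothly on $x$. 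This setup also yields the Cauchy--Schwarz-type inequality $\langle \xi,x\rangle\le F(\xi)\,F^0(x)$, which I will use freely.

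For part (1), I would combine Euler's relation $\langle DF(\xi),\xi\rangle=F(\xi)$ (valid since $F$ is $1$-homogeneous) with the subgradient inequality coming from convexity: for every $\eta$,
\begin{equation*}
F(\eta)\ge F(\xi)+\langle DF(\xi),\eta-\xi\rangle=\langle DF(\xi),\eta\rangle .
\end{equation*}
Hence $\langle DF(\xi),\eta\rangle/F(\eta)\le 1$ for all $\eta\neq 0$, with equality at $\eta=\xi$, so by definition $F^0(DF(\xi))=1$. The companion identity $F(DF^0(x))=1$ then follows by applying this to $F^0$ in place of $F$, once one knows the duality is reflexive, i.e.\ $(F^0)^0=F$; this is the bipolar theorem for the body $\{F\le 1\}$, which I would cite or check directly from the support-function formula above.

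For part (2), I would differentiate $F^0(x)=\langle \xi^\ast(x),x\rangle$. Since $\xi^\ast(x)$ maximizes $\langle\,\cdot\,,x\rangle$ on $\{F=1\}$, the Lagrange condition gives $x=\mu\,DF(\xi^\ast(x))$ for a scalar $\mu$; pairing with $\xi^\ast(x)$ and using Euler's relation together with $F(\xi^\ast(x))=1$ forces $\mu=\langle \xi^\ast(x),x\rangle=F^0(x)$, so
\begin{equation*}
x=F^0(x)\,DF(\xi^\ast(x)).
\end{equation*}
Differentiating $F^0(x)=\langle\xi^\ast(x),x\rangle$ in $x_i$ gives $\partial_{x_i}F^0(x)=\xi^\ast_i(x)+\sum_j\big(\partial_{x_i}\xi^\ast_j(x)\big)x_j$; inserting the previous display converts the last sum into $F^0(x)\sum_j\big(\partial_{x_i}\xi^\ast_j(x)\big)\big(DF(\xi^\ast(x))\big)_j=F^0(x)\,\partial_{x_i}\big(F(\xi^\ast(x))\big)=0$, since $F(\xi^\ast(x))\equiv 1$. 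Thus $DF^0(x)=\xi^\ast(x)$, and substituting this back yields $x=F^0(x)\,DF(DF^0(x))$. The remaining identity $F(\xi)\,DF^0(DF(\xi))=\xi$ is the same statement with $F$ and $F^0$ exchanged, again via $(F^0)^0=F$.

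The main difficulty is not any individual computation but the bookkeeping: one must make sure the uniform ellipticity of $F^2$ is exactly what promotes ``a maximizer exists'' to ``the maximizer is a single-valued smooth map $\xi^\ast(x)$'', which is what legitimizes the envelope step in part (2), and one must handle the reflexivity $(F^0)^0=F$ carefully since both halves of (1) and (2) rely on it. A cleaner alternative that avoids $\xi^\ast$ altogether is to invoke the Legendre duality between the $2$-homogeneous convex functions $\tfrac12 F^2$ and $\tfrac12(F^0)^2$, whose gradients $\xi\mapsto F(\xi)DF(\xi)$ and $x\mapsto F^0(x)DF^0(x)$ are mutually inverse diffeomorphisms of $\R^n$; together with the $0$-homogeneity of $DF$ and part (1) this produces (2) in one line, and I would present it this way if the Legendre-transform machinery is taken as known.
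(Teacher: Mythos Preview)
Your argument is correct; the main steps (Euler's relation plus the subgradient inequality for part (1), and the Lagrange/envelope computation identifying $DF^0(x)$ with the unique maximizer $\xi^\ast(x)$ for part (2)) are the standard ones, and your remarks about strict convexity of $\{F=1\}$ and reflexivity $(F^0)^0=F$ correctly isolate the places where the uniform ellipticity hypothesis is used. The Legendre-transform alternative you sketch at the end is also valid and is indeed the cleanest packaging.

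There is nothing to compare here, however: the paper does not supply a proof of this proposition at all. It is stated as a recalled fact (``Recall that $F$ and $F^0$ satisfy the following properties\ldots'') in the preliminaries section, with no argument given. So your write-up is not an alternative to the paper's proof but a self-contained justification of a result the authors take as background.
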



Next we define the Wulff ball and the Wulff shape determined by $F$.
\begin{defn}
The \emph{Wulff ball} $\mathcal{W}$ centered at the origin is defined as
\begin{equation*}
\mathcal{W} := \{ x\in\mathbb{R}^{n}: F^0(x) < 1 \}.
\end{equation*}
Its boundary $\partial \mathcal{W}$ is called the \emph{Wulff shape}.
\end{defn}


Given a Wulff ball $\mathcal{W}$, we can recover $F$ as the support function of $\mathcal{W}$, namely,
\begin{align*}
F(\xi) = \sup_{X \in \mathcal{W}} \langle \xi, X \rangle,\quad  \xi\in \mathbb{S}^{n-1}.
\end{align*}

Now we introduce the anisotropic area of a smooth oriented hypersurface $X:N\to M\subset \R^n$.
\begin{defn}
	Let $M \subset \mathbb{R}^n$ be a smooth oriented hypersurface and $\nu$ be its unit normal vector. We define the \emph{anisotropic area} of $M$ as $|M|_F := \int_{M} F(\nu) \,d\mu$. Denote by $d\mu_F=F(\nu)d\mu$ the \emph{anisotropic area element} of $M$.
\end{defn}

\begin{rem}
For $M=\pt \mathcal{W}$, we can check by the divergence theorem that
\begin{equation*}
|\partial \mathcal{W}|_F := \int_{\partial \mathcal{W}} F(\nu) \,d\mu = \int_{\partial \mathcal{W}} \langle X, \nu\rangle \,d\mu = \int_{\mathcal{W}} \mathrm{div} X\,dx = n|\mathcal{W}|.
\end{equation*}
\end{rem}

Next we introduce the anisotropic Gauss map for an oriented hypersurface in $\R^n$ (for example, see \cite{he-li2008}).
\begin{defn}
The \emph{anisotropic Gauss map} $\nu_F: M \to \pt \mathcal{W}$ from an oriented hypersurface $M$ in $\R^{n}$ to the Wulff shape $\pt\mathcal{W}$ is defined by
\begin{align*}
\nu_F : \ & M \to \pt \mathcal{W},\\
&X \mapsto DF(\nu(X))= F(\nu(X)) \nu(X) + \nabla^{\mathbb{S}^{n-1}} F (\nu(X)),
\end{align*}
where $\nu$ is the unit normal vector of $M$.
\end{defn}
\begin{rem}
The vector $\nu_F$ is also called the anisotropic unit normal of the hypersurface.
\end{rem}

Let $A_F$ be the 2-tensor on $\SS^{n-1}$ defined by
\begin{align*}
A_F(\xi) =( \nabla^{\mathbb{S}^{n-1}})^2 F(\xi) + F(\xi) \sigma,\quad \xi \in \SS^{n-1},
\end{align*}
where $\sigma$ is the standard metric on $\mathbb{S}^{n-1}$.

\begin{defn}
The \emph{anisotropic principal curvatures} $\kappa_1^F, \dots, \kappa_{n-1}^F$ of a smooth oriented hypersurface $M$ in $\R^n$ are defined as the eigenvalues of the tangent map
\begin{equation*}
d\nu_F:T_XM\to T_{\nu_F(X)}\PW\cong T_XM.
\end{equation*}
The \emph{anisotropic mean curvature} is defined as
\begin{align*}
H_F := \mathrm{tr}(d\nu_F) =\sum_i \kappa_i^F = \sum_{i, j, k} (A_F)_{i}^{j} \left(\nu(X)\right) g^{ik}(X) h_{kj}(X),
\end{align*}
where $g$ and $h$ are the first and second fundamental forms of the hypersurface respectively. We call $M$ anisotropically mean convex or $F$-mean convex if $H_F>0$ on $M$.
\end{defn}
Now we are ready to state the result concerning the inverse anisotropic mean curvature flow in the Euclidean space. In \cite{Xia17}, following the (isotropic) works \cite{Ger90,Urb90}, Chao Xia considered the inverse anisotropic mean curvature flow for a star-shaped $F$-mean convex hypersurface, i.e., the parabolic evolution equation
\begin{align}\label{eq-IAMCF}
\partial_t X = \frac{1}{H_F} \nu_F,
\end{align}
where $X:N\times [0,T)\to \R^n$ is a smooth family of embeddings from a closed manifold $N$ to $\R^n$. He proved the following result on this flow.
\begin{thm}[\cite{Xia17}]
Let $M$ be a smooth compact star-shaped and $F$-mean convex hypersurface without boundary in $\R^n$ ($n\geq  3$). Then the inverse anisotropic mean curvature flow starting from $M$ exists for all time, and converges smoothly and exponentially to an expanded Wulff shape determined by the initial hypersurface $M$.
\end{thm}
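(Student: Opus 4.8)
The plan is to follow the strategy that is by now standard for inverse curvature flows of star-shaped hypersurfaces, in the form used by Gerhardt and Urbas and adapted to the anisotropic setting: reduce \eqref{eq-IAMCF} to a single scalar parabolic PDE on $\mathbb{S}^{n-1}$, prove uniform a priori estimates up to second order, bootstrap to $C^\infty$, and then rescale to extract the limit.

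\emph{Reduction to a scalar equation and short-time existence.} Since $M$ is star-shaped I would write the flow hypersurfaces as radial graphs $X(\xi,t)=\rho(\xi,t)\,\xi$ over $\mathbb{S}^{n-1}$. Because $F$ is $1$-homogeneous, Euler's identity gives $\langle DF(\nu),\nu\rangle=F(\nu)$, so the normal speed of \eqref{eq-IAMCF} equals $F(\nu)/H_F$, and $\rho$ solves a scalar quasilinear equation $\partial_t\rho=\mathcal{G}(\xi,\rho,\nabla\rho,\nabla^2\rho)$ whose principal part acts through $A_F$ and the Hessian of $\rho$. The uniform ellipticity of $F^2$ together with the $F$-mean convexity of $M$ makes this equation uniformly parabolic for a short time, so short-time existence and uniqueness follow from the standard theory of quasilinear parabolic equations.

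\emph{A priori estimates.} These are the heart of the matter. For the $C^0$ bound, note that the family of dilated Wulff shapes $r(t)\,\partial\mathcal{W}$ is an exact solution of the flow (with $r(t)=r_0e^{t/(n-1)}$), so the avoidance/comparison principle traps $M_t$ between two such solutions and yields $c_1 e^{t/(n-1)}\le\rho\le c_2 e^{t/(n-1)}$. For the $C^1$ bound, applying the maximum principle to the graph quantity $v=v(\rho,\nabla\rho)$ (the anisotropic analogue of $\sqrt{1+|\nabla\log\rho|^2}$) along its evolution equation shows that $v$ stays uniformly bounded, so star-shapedness is preserved. For the $C^2$ bound, from the evolution equation of $H_F$ (equivalently of $1/H_F$) and the maximum principle one proves that $F$-mean convexity is preserved and that $H_F$ is pinched between fixed multiples of $e^{-t/(n-1)}$; combined with the previous estimates this controls the anisotropic principal curvatures $\kappa_i^F$. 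This is the step where the anisotropy genuinely bites: the anisotropic Weingarten operator involves the tensor $A_F$ in addition to the ordinary second fundamental form, so the Simons-type identity and the maximum-principle computation must be redone in that modified geometry. Once the flow is uniformly parabolic with bounded $C^2$ norm, Krylov--Safonov together with Schauder estimates give uniform $C^\infty$ bounds on each finite time interval, and a continuation argument then yields long-time existence.

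\emph{Convergence.} Finally I would pass to the rescaled radial function $\widetilde\rho:=e^{-t/(n-1)}\rho$ and the correspondingly rescaled hypersurfaces. The key estimate is that the anisotropic umbilicity defect --- measured, say, by $\max_{i,j}|\kappa_i^F-\kappa_j^F|$ or by the traceless part of the anisotropic Weingarten operator --- satisfies a differential inequality forcing exponential decay of order $e^{-t/(n-1)}$; interpolating with the higher-order bounds propagates this decay to all derivatives. Integrating $\partial_t\widetilde\rho$ in $t$ then shows that $\widetilde\rho$ converges in $C^\infty(\mathbb{S}^{n-1})$, and the exponential decay of the umbilicity defect identifies the limit as a translated, scaled Wulff shape determined by $M$; undoing the rescaling shows that $M_t$ is, up to exponentially small error, an expanding Wulff shape. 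I expect the $C^2$ estimate and the exponential decay of the anisotropic umbilicity defect to be the main obstacles, precisely because the clean algebraic identities available for the isotropic second fundamental form must be replaced throughout by their $A_F$-weighted analogues.
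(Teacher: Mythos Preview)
The paper does not prove this theorem at all: it is quoted verbatim from \cite{Xia17} as a background tool in Section~\ref{sec2.3}, with no argument given. So there is no ``paper's own proof'' to compare your proposal against.

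That said, your sketch is a faithful outline of the strategy actually used in \cite{Xia17}, which in turn follows the Gerhardt--Urbas template \cite{Ger90,Urb90}: write the flow as a scalar parabolic PDE for the radial graph function, obtain $C^0$ bounds by comparison with expanding Wulff shapes, $C^1$ bounds from the evolution of the graph quantity, $C^2$ bounds from the evolution of $H_F$ and the anisotropic Weingarten operator, then bootstrap via Krylov--Safonov and Schauder, and finally rescale and show exponential decay of the anisotropic umbilicity defect. Your identification of the $C^2$ estimate and the decay of the traceless anisotropic second fundamental form as the places where the anisotropy genuinely complicates matters is accurate. For the purposes of the present paper, however, none of this needs to be reproduced; the result is simply invoked.
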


\section{Upper bounds in $\mathbb{H}^n$ via the inverse mean curvature flow}\label{sec3}

Consider the inverse mean curvature flow
\begin{equation}
\pt_t X=\frac{1}{\sigma_1}\nu
\end{equation}
in the hyperbolic space $\H^n$ with the initial hypersurface $\pt K$ and the resulting hypersurfaces $M_t$, where $\sigma_1$ is the mean curvature of $M_t$. Then we get (see Section~3 in \cite{LX22} for a similar derivation)
\begin{align}
\mathrm{Cap}_p(K)\leq \left(\int_0^\infty T_p^{-1/(p-1)}(t)dt\right)^{1-p},
\end{align}
and
\begin{equation*}
T_p(t)=\int_{M_t}\sigma_1^{p-1}d\mu_t.
\end{equation*}
In the following we consider three cases separately.
\subsection{The case $n\geq 2$ and $p=2$} For this case we need to estimate $\int_{M_t}\sigma_1d\mu_t$.

Recall the quermassintegrals
\begin{align*}
W_0(K)&:=|K|,\\
W_1(K)&:=\frac{1}{n}|\pt K|,\\
W_2(K)&:=\frac{1}{n(n-1)}\int_{\pt K}\sigma_1d\mu-\frac{1}{n}|K|.
\end{align*}
Under the inverse mean curvature flow, using the variational formula (3.5) in \cite{WX14} we get (for $n=2$, the right-hand side of the first line below is understood to be $0$; see the simplified argument at the end of this part)
\begin{align*}
\frac{d}{dt}W_2(K_t)&=\frac{n-2}{n}\int_{\pt K_t}\frac{\sigma_2}{C_{n-1}^2}\frac{1}{\sigma_1}d\mu_t\\
&\leq  \frac{n-2}{n(n-1)^2}\int_{\pt K_t}\sigma_1d\mu_t\\
&=\frac{n-2}{n-1}\left(\frac{1}{n(n-1)}\int_{\pt K_t}\sigma_1d\mu_t-\frac{1}{n}|K_t|\right)+\frac{n-2}{n(n-1)}|K_t|\\
&=\frac{n-2}{n-1}W_2(K_t)+\frac{n-2}{n(n-1)}|K_t|.
\end{align*}
Here $K_t$ is the compact set enclosed by $M_t$ so that $\pt K_t=M_t$. Let $I$ be the isoperimetric function on $\H^n$, i.e., $|B_r|=I(|\pt B_r|)$. Then
\begin{equation}
|K_t|\leq I(|\pt K_t|)=I(e^t |\pt K_0|),
\end{equation}
where we used $|\pt K_t|=e^t|\pt K_0|$.

So we obtain
\begin{align*}
W_2(K_t)\leq e^{\frac{n-2}{n-1}t}\left(W_2(K_0)+\frac{n-2}{n(n-1)}\int_0^t e^{-\frac{n-2}{n-1}\tau}I(e^\tau |\pt K_0|)d\tau\right).
\end{align*}

Then we get an upper bound for $\int_{\pt K_t}\sigma_1d\mu_t$ as
\begin{align*}
&\frac{1}{n(n-1)}\int_{\pt K_t}\sigma_1d\mu_t=W_2(K_t)+\frac{1}{n}|K_t|\\
&\leq e^{\frac{n-2}{n-1}t}\left(W_2(K_0)+\frac{n-2}{n(n-1)}\int_0^t e^{-\frac{n-2}{n-1}\tau}I(e^\tau |\pt K_0|)d\tau\right)+\frac{1}{n}I(e^t |\pt K_0|),
\end{align*}
where again we used $|\pt K_t|=e^t|\pt K_0|$.

Then we get
\begin{align*}
&\qquad \qquad \mathrm{Cap}_p(K)\leq n(n-1)\times  \\
& \left(\int_0^\infty \left(e^{\frac{n-2}{n-1}t}\left(W_2(K_0)+\frac{n-2}{n(n-1)}\int_0^t e^{-\frac{n-2}{n-1}\tau}I(e^\tau |\pt K_0|)d\tau\right)+\frac{1}{n}I(e^t |\pt K_0|)\right)^{-1}dt\right)^{-1}.
\end{align*}

In particular, when $n=2$, $W_2(K_t)$ is a constant $\pi$ by the Gauss--Bonnet theorem. So
\begin{align*}
\int_{\pt K_t}\sigma_1d\mu_t&=2\pi +|K_t|\leq \sqrt{4\pi^2+e^{2t}|\pt K_0|^2},
\end{align*}
 since by $|\pt B_r|^2=4\pi |B_r|+|B_r|^2$ we know
\begin{equation}
I(s)=\sqrt{4\pi^2+s^2}-2\pi.
\end{equation}
Then we get
\begin{align*}
\mathrm{Cap}_2(K)&\leq \left(\int_0^\infty (4\pi^2+e^{2t}|\pt K_0|^2)^{-1/2}dt\right)^{-1}=\frac{2\pi}{\mathrm{arsinh}(2\pi/|\pt K_0|)}.
\end{align*}

Last, assume the equality holds. Then the equality case of the isoperimetric inequality used in the above argument implies that the boundary $\pt K_0$ must be a geodesic sphere. So we finish the proof of Theorem~\ref{thm1}.

\subsection{The case $n=2$ and $p\geq 3$}

In this case we get
\begin{align*}
&\frac{d}{dt}T_p(t)=\frac{d}{dt}\int_{M_t}\sigma_1^{p-1}d\mu_t\\
&=\int_{M_t}(p-1)\sigma_1^{p-2}\left(-\Delta \frac{1}{\sigma_1}-\sigma_1+\frac{1}{\sigma_1}\right)+\sigma_1^{p-1}d\mu_t\\
&=-(p-1)(p-2)\int_{M_t}\sigma_1^{p-5}|\nabla \sigma_1|^2d\mu_t\\
&\quad -(p-2)\int_{M_t}\sigma_1^{p-1}d\mu_t+(p-1)\int_{M_t} \sigma_1^{p-3}d\mu_t.
\end{align*}
Noting $p\geq 3$, we see
\begin{align*}
&\frac{d}{dt}T_p(t)\leq -(p-2)\int_{M_t}\sigma_1^{p-1}d\mu_t+(p-1)\int_{M_t} \sigma_1^{p-3}d\mu_t.
\end{align*}
When $p=3$, noting $|M_t|=e^t|M|$, we can get
\begin{align*}
\int_{M_t}\sigma_1^2d\mu_t\leq e^{-t}\int_M \sigma_1^2d\mu_t+2\sinh t |M|,
\end{align*}
or
\begin{align*}
|M_t|\int_{M_t}\sigma_1^2d\mu_t-|M_t|^2\leq |M|\int_M\sigma_1^2d\mu-|M|^2.
\end{align*}
%
Next consider $p>3$. Then by the H\"{o}lder inequality we obtain
\begin{align*}
\int_{M_t} \sigma_1^{p-3}d\mu_t&\leq \left(\int_{M_t}\sigma_1^{(p-3)\frac{p-1}{p-3}}d\mu_t\right)^{\frac{p-3}{p-1}}|M_t|^{\frac{2}{p-1}}=\left(\int_{M_t}\sigma_1^{p-1}d\mu_t\right)^{\frac{p-3}{p-1}}|M_t|^{\frac{2}{p-1}}.
\end{align*}
So we get
\begin{align*}
&\frac{d}{dt}T_p(t)\leq -(p-2)T_p(t)+(p-1)\left(T_p(t)\right)^{\frac{p-3}{p-1}}|M_t|^{\frac{2}{p-1}}.
\end{align*}
Solving it yields
\begin{align*}
\left(|M_t|^{p-2}T_p(t)\right)^{\frac{2}{p-1}}-|M_t|^2\leq \left(|M|^{p-2}T_p(0)\right)^{\frac{2}{p-1}}-|M|^2.
\end{align*}
Note that this result also covers the case $p=3$.

In summary, for $p\geq 3$, we obtain
\begin{align*}
\left(T_p(t)\right)^{\frac{1}{p-1}}\leq |M_t|^{-\frac{p-2}{p-1}} \left(|M_t|^2+ \left(|M|^{p-2}T_p(0)\right)^{\frac{2}{p-1}}-|M|^2\right)^{\frac{1}{2}}.
\end{align*}
So we get
\begin{align*}
\mathrm{Cap}_p(K)\leq \left(\int_0^\infty |M_t|^{\frac{p-2}{p-1}} \left(|M_t|^2+ \left(|M|^{p-2}T_p(0)\right)^{\frac{2}{p-1}}-|M|^2\right)^{-\frac{1}{2}}dt\right)^{1-p}.
\end{align*}

Last, assume the equality holds. Then checking the above argument we see that $\sigma_1$ is constant on the flow curve $M_t$. Thus $M_t$ is a geodesic circle and so is $\pt K=M_0$. We finish the proof of Theorem~\ref{thm2}.

\subsection{The case $n=3$ and $1<p\leq 3$}

Let us first estimate $\int_{M_t}\sigma_1^2d\mu_t$ using the inverse mean curvature flow.

We compute
\begin{align*}
&\frac{d}{dt}\int_{M_t}\sigma_1^2d\mu_t=\int_{M_t}2\sigma_1\left(-\Delta \frac{1}{\sigma_1}-\frac{1}{\sigma_1}|h|^2+\frac{2}{\sigma_1}\right)+\sigma_1^2d\mu_t\\
&\leq \int_{M_t}-2|h|^2+4+\sigma_1^2d\mu_t.
\end{align*}
Now use the Gauss equation $2K_M=\sigma_1^2-|h|^2-2$ to replace $|h|^2$. Here $K_M$ is the Gauss curvature for $M_t$ so that $\int_{M_t}K_Md\mu_t=4\pi$. Therefore
\begin{align*}
&\frac{d}{dt}\int_{M_t}\sigma_1^2d\mu_t\leq \int_{M_t}(4K_M-2\sigma_1^2+4)+4+\sigma_1^2d\mu_t\\
&=-\int_{M_t}\sigma_1^2d\mu_t+8e^t|M|+16\pi,
\end{align*}
which implies
\begin{align*}
\frac{d}{dt}\left(e^{t}\int_{M_t}\sigma_1^2d\mu_t\right)\leq e^{t}\left(8e^t|M|+16\pi\right).
\end{align*}
Here we may rewrite the above inequality as
\begin{align*}
\frac{d}{dt}\left(|M_t|\int_{M_t}\sigma_1^2d\mu_t-4|M_t|^2-16\pi |M_t|\right)\leq 0.
\end{align*}
So we can define the modified Hawking mass in the hyperbolic space by
\begin{align*}
m_H(M_t):&=|M_t|\left(16\pi+4|M_t|-\int_{M_t}\sigma_1^2d\mu_t\right),
\end{align*}
which is a non-decreasing quantity in $t$.
\begin{rem}
The modified Hawking mass in the hyperbolic space was introduced by Hung and Wang in \cite{HW15} and its monotonicity along the inverse mean curvature flow was also proved there. Here we include the proof of the monotonicity for the convenience of readers.
\end{rem}
\begin{rem}
Using the Gauss equation and the Gauss--Bonnet theorem, we see $m_H(M_t)\leq 0$ and $m_H(M_t)=0$ if and only if the surface $M_t$ is a geodesic sphere.
\end{rem}

So we have
\begin{align*}
&\int_{M_t}\sigma_1^2d\mu_t\leq \left(\int_M \sigma_1^2d\mu-4|M|-16\pi\right)e^{-t}+4|M|e^{t}+16\pi.
\end{align*}
Now for $1<p\leq 3$ we can derive
\begin{align*}
T_p(t)&=\int_{M_t}\sigma_1^{p-1}d\mu_t\leq \left(\int_{M_t}\sigma_1^2d\mu_t\right)^{\frac{p-1}{2}}|M_t|^{\frac{3-p}{2}}.
\end{align*}
So
\begin{align*}
T_p(t)&\leq \left(\left(\int_M \sigma_1^2d\mu-4|M|-16\pi\right)e^{-t}+4|M|e^{t}+16\pi\right)^{\frac{p-1}{2}}e^{\frac{3-p}{2}t}|M|^{\frac{3-p}{2}}.
\end{align*}
Then
\begin{align*}
&\mathrm{Cap}_p(K)\leq \left(\int_0^\infty T_p^{-1/(p-1)}(t)dt\right)^{1-p}\\
&\leq \left(\int_0^\infty\left(\left(\int_M \sigma_1^2d\mu-4|M|-16\pi\right)e^{-t}+4|M|e^{t}+16\pi\right)^{-\frac{1}{2}}e^{-\frac{3-p}{2(p-1)}t}dt\right)^{1-p}|M|^{\frac{3-p}{2}}.
\end{align*}

Last, assume the equality holds. Then we can check that the modified Hawking mass $m_H(M_t)$ is constant along the inverse mean curvature flow, which implies that the flow hypersurfaces $M_t$ are geodesic spheres. So $\pt K=M_0$ is a geodesic sphere and we finish the proof of Theorem~\ref{thm3}.

\begin{rem}
In particular, for $p=2$, we get
\begin{align*}
&\mathrm{Cap}_2(K)\leq  \left(\int_0^\infty\left(a e^{-t}+be^{t}+c\right)^{-\frac{1}{2}}e^{-\frac{1}{2}t}dt\right)^{-1}|M|^{\frac{1}{2}},
\end{align*}
where
\begin{align*}
a&=\int_M \sigma_1^2d\mu-4|M|-16\pi, \quad b=4|M|,\quad c=16\pi.
\end{align*}
If $4ab>c^2$, we get
\begin{align*}
&\int_0^\infty\left(a e^{-t}+be^{t}+c\right)^{-\frac{1}{2}}e^{-\frac{1}{2}t}dt=-\frac{1}{\sqrt{a}}\mathrm{arsinh}\frac{2ae^{-t}+c}{\sqrt{4ab-c^2}}\bigg|_0^\infty\\
&=\frac{1}{\sqrt{a}}\mathrm{arsinh}\frac{2a+c}{\sqrt{4ab-c^2}}-\frac{1}{\sqrt{a}}\mathrm{arsinh}\frac{c}{\sqrt{4ab-c^2}}.
\end{align*}
If $4ab<c^2$, we get
\begin{align*}
&\int_0^\infty\left(a e^{-t}+be^{t}+c\right)^{-\frac{1}{2}}e^{-\frac{1}{2}t}dt=-\frac{1}{\sqrt{a}}\mathrm{arcosh}\frac{2ae^{-t}+c}{\sqrt{c^2-4ab}}\bigg|_0^\infty\\
&=\frac{1}{\sqrt{a}}\mathrm{arcosh}\frac{2a+c}{\sqrt{c^2-4ab}}-\frac{1}{\sqrt{a}}\mathrm{arcosh}\frac{c}{\sqrt{c^2-4ab}}.
\end{align*}
If $4ab=c^2$, we get
\begin{align*}
&\int_0^\infty\left(a e^{-t}+be^{t}+c\right)^{-\frac{1}{2}}e^{-\frac{1}{2}t}dt=-\frac{1}{\sqrt{a}}\log(2ae^{-t}+c)\bigg|_0^\infty\\
&=\frac{1}{\sqrt{a}}\log(2a+c)-\frac{1}{\sqrt{a}}\log c.
\end{align*}
\end{rem}

\section{The upper bound in $\mathbb{H}^n$ via the unit-speed normal flow}\label{sec4}

In this section we consider a convex hypersurface $M_0$ and use the unit-speed normal flow
\begin{align}\label{eq-normal-flow}
\pt_t X=\nu,
\end{align}
where $X:N\times [0,T)\to \H^n$ is a smooth family of embeddings from a closed manifold $N$ to $\H^n$.

Here we use the hyperboloid model in the Minkowski space $\R^{n,1}$ for $\H^n$. More precisely, the Minkowski space $\R^{n,1}$ is the linear space $\R^{n+1}$ equipped with the Lorentz metric
\begin{equation*}
ds^2=dx_1^2+dx_2^2+\cdots+dx_n^2-dx_{n+1}^2.
\end{equation*}
Then $\H^n$ is viewed as the set
\begin{equation*}
\H^n=\{x\in \R^{n,1}|x_{n+1}=\sqrt{1+x_1^2+x_2^2+\cdots+x_n^2}\}
\end{equation*}
with the induced metric.

So under the unit-speed normal flow \eqref{eq-normal-flow} we have
\begin{align*}
X(p,t)=\cosh t \cdot X(p,0)+\sinh t\cdot  \nu(p,0).
\end{align*}
Let $\{e_i\}_{i=1}^{n-1}$ be orthonormal principal directions in a neighbourhood of a point $X(p,0)$ on $M_0$. Then the vectors
\begin{equation*}
X(\cdot,t)_*(e_i)=(\cosh t+\sinh t\cdot \kappa_i)e_i
\end{equation*}
form an orthogonal frame in a neighbourhood of the point $X(p,t)$. So
\begin{align*}
d\mu_t=\prod_{i=1}^{n-1}(\cosh t+\sinh t\cdot \kappa_i)d\mu=\sum_{i=0}^{n-1}\cosh^{n-1-i}t\cdot \sinh^i t\cdot \sigma_id\mu.
\end{align*}
Therefore
\begin{align*}
T_p(t)=|M_t|=\int_M \sum_{i=0}^{n-1}\cosh^{n-1-i}t\cdot \sinh^i t\cdot \sigma_id\mu,
\end{align*}
and then
\begin{align*}
&\mathrm{Cap}_p(K)\leq \left(\int_0^\infty T_p^{-1/(p-1)}(t)dt\right)^{1-p}\\
&=\left(\int_0^\infty\left( \int_M \sum_{i=0}^{n-1}\cosh^{n-1-i}t\cdot \sinh^i t\cdot \sigma_id\mu\right)^{-\frac{1}{p-1}} dt\right)^{1-p} .
\end{align*}

Last, direct computation shows that for a geodesic ball $K=\overline{B_r}$, the above inequality becomes an equality. So we finish the proof of Theorem~\ref{thm4}.

\begin{rem}
Consider the special case $n=2$ and $p=2$. Then we can compute to get a more transparent inequality
\begin{align*}
\mathrm{Cap}_2(K)\leq \sqrt{(\int_M\kappa d\mu)^2-|M|^2}\left(\log \frac{\int_M \kappa d\mu+\sqrt{(\int_M\kappa d\mu)^2-|M|^2}}{|M| }\right)^{-1},
\end{align*}
provided $\int_M \kappa d\mu\geq |M|$. For comparison, if $K=\overline{B_r}$, we have the explicit expression
\begin{align*}
\mathrm{Cap}_2(B_r)=2\pi \left(\int_r^\infty \sinh^{-1} tdt\right)^{-1}=4\pi \left(\log \frac{\cosh r+1}{\cosh r-1}\right)^{-1},
\end{align*}
which is equal to the right-hand side of the above inequality.

On the other hand, if $\int_M\kappa d\mu<|M|$, we get
\begin{align*}
\mathrm{Cap}_2(K)\leq \frac{1}{2}\sqrt{|M|^2-(\int_M\kappa d\mu)^2}\left(\arctan \sqrt{ \frac{|M|-\int_M\kappa d\mu}{|M|+\int_M\kappa d\mu}}\right)^{-1}.
\end{align*}
\end{rem}

\section{The upper bound in $\mathbb{R}^3$ via the weak inverse mean curvature flow}\label{sec5}

\begin{proof}[Proof of Theorem~\ref{thm5}]

In the setting of Theorem~\ref{thm5} we choose the test function $f(x)=\bar{f}(\phi(x))$ for some $C^1$ function $\bar{f}:[0,\infty)\to \R$ satisfying $\bar{f}(0)=1$ and $\bar{f}(\infty)=0$ to be determined. It is a classical fact that this kind of functions can be admissible test functions; see e.g. \cite[Definition~1]{BM08} and the remark below \cite[Definition~1.2]{Shu}. Therefore
\begin{align*}
\mathrm{Cap}_p(K)&\leq \int_U |\nabla f|^pdx=\int_U (\bar{f}'(\phi(x)))^p |\nabla \phi|^pdx.
\end{align*}
Using the co-area formula, we get
\begin{align*}
\int_U (\bar{f}'(\phi(x)))^p |\nabla \phi|^pdx&=\int_0^\infty (\bar{f}'(t))^p \int_{\Sigma_t} |\nabla \phi|^{p-1}d\mu_t dt.
\end{align*}
Note that
\begin{align*}
\int_{\Sigma_t} |\nabla \phi|^{p-1}d\mu_t&=\int_{\Sigma_t}H^{p-1}d\mu_t\leq \left(\int_{\Sigma_t}H^2d\mu_t\right)^{(p-1)/2}|\Sigma_t|^{(3-p)/2}\\
&\leq \left(16\pi -e^{-t}\left(16\pi-\int_{\Sigma'}H'^2 d\mu\right)\right)^{(p-1)/2}e^{\frac{3-p}{2}t}|\Sigma'|^{(3-p)/2},
\end{align*}
where we used the H\"{o}lder inequality and the monotonicity of the modified Hawking mass. Here $H'$ denotes the mean curvature of the surface $\Sigma'$. Thus we get
\begin{align*}
\int_U |\nabla f|^pdx&\leq \int_0^\infty (\bar{f}'(t))^p \left(16\pi +e^{-t}\left(\int_{\Sigma'}H'^2 d\mu-16\pi\right)\right)^{(p-1)/2} e^{\frac{3-p}{2}t}dt\\
&\quad \times |\Sigma'|^{(3-p)/2}.
\end{align*}
Meanwhile, note that by the H\"{o}lder inequality, we have
\begin{align*}
1&=(\bar{f}(0))^p=\left(-\int_0^\infty \bar{f}'(t)dt\right)^p\\
&\leq \int_0^\infty(\bar{f}'(t))^p \left(16\pi +e^{-t}\left(\int_{\Sigma'}H'^2 d\mu-16\pi\right)\right)^{(p-1)/2} e^{\frac{3-p}{2}t} dt\\
&\quad \times \left( \int_0^\infty   \left(16\pi +e^{-t}\left(\int_{\Sigma'}H'^2 d\mu-16\pi\right)\right)^{-1/2}e^{-\frac{3-p}{2(p-1)}t} dt\right)^{p-1},
\end{align*}
with the equality when
\begin{equation*}
\bar{f}'(t)=c\left(16\pi +e^{-t}\left(\int_{\Sigma'}H'^2 d\mu-16\pi\right)\right)^{-1/2}e^{-\frac{3-p}{2(p-1)}t},\quad c\in \R.
\end{equation*}
Set
\begin{equation*}
s':=\frac{\int_{\Sigma'}H'^2 d\mu}{16\pi}-1.
\end{equation*}
So noticing $\bar{f}(0)=1$ and $\bar{f}(\infty)=0$ we may choose
\begin{equation*}
\bar{f}(t)=\frac{\int_t^\infty \left(1 +s'e^{-\tau }\right)^{-1/2}e^{-\frac{3-p}{2(p-1)}\tau}d\tau}{\int_0^\infty \left(1 +s'e^{-\tau}\right)^{-1/2}e^{-\frac{3-p}{2(p-1)}\tau}d\tau}.
\end{equation*}
Then in this case we get
\begin{align*}
\int_U |\nabla f|^pdx&\leq \left( \int_0^\infty   \left(16\pi +e^{-t}\left(\int_{\Sigma'}H'^2 d\mu-16\pi\right)\right)^{-1/2}e^{-\frac{3-p}{2(p-1)}t} dt\right)^{1-p}\\
&\quad \times |\Sigma'|^{(3-p)/2}\\
&=(16\pi)^{\frac{p-1}{2}}\left( \int_0^\infty   \left(1 +s'e^{-t}\right)^{-1/2}e^{-\frac{3-p}{2(p-1)}t} dt\right)^{1-p}\cdot |\Sigma'|^{(3-p)/2}.
\end{align*}
As a result, we have
\begin{align*}
\mathrm{Cap}_p(K)&\leq (16\pi)^{\frac{p-1}{2}}\left( \int_0^\infty   \left(1 +s'e^{-t}\right)^{-1/2}e^{-\frac{3-p}{2(p-1)}t} dt\right)^{1-p}\cdot |\Sigma'|^{(3-p)/2}.
\end{align*}

Next we replace $\Sigma'$ by $\Sigma$. First recall that $\Sigma'$ strictly minimizes area among all surfaces homologous to $\Sigma$. So $|\Sigma'|\leq |\Sigma|$.

Second, because $\Sigma$ is $C^2$, the surface $\Sigma'$ is $C^{1,1}$ and moreover $C^\infty$ where $\Sigma'$ does not contact $\Sigma$. Besides, the mean curvature $H'$ of $\Sigma'$ satisfies
\begin{align*}
H'=0\text{ on }\Sigma'\setminus \Sigma, \text{ and } H'=H\geq 0\text{ } a.e. \text{ on }\Sigma'\cap \Sigma.
\end{align*}
Therefore we see
\begin{equation*}
\int_{\Sigma'}H'^2 d\mu\leq \int_\Sigma H^2 d\mu.
\end{equation*}
In conclusion, we derive
\begin{align*}
\mathrm{Cap}_p(K)&\leq (16\pi)^{\frac{p-1}{2}}\left( \int_0^\infty   \left(1 +se^{-t}\right)^{-1/2}e^{-\frac{3-p}{2(p-1)}t} dt\right)^{1-p}\cdot |\Sigma|^{(3-p)/2},
\end{align*}
where
\begin{equation*}
s:=\frac{\int_{\Sigma}H^2 d\mu}{16\pi}-1.
\end{equation*}
If $\int_\Sigma H^2 d\mu=16\pi$, then $\Sigma$ is a round sphere (see, e.g., \cite{Che71,RS10,Wil68}) with radius $r_0>0$ and we can compute to get
\begin{equation*}
\mathrm{Cap}_p(K)=\left(\frac{3-p}{p-1}\right)^{p-1}4\pi r_0^{3-p}.
\end{equation*}
If $\int_\Sigma H^2 d\mu>16\pi$, then $s>0$ and we take the change of variables
\begin{equation*}
se^{-t}=r^{\frac{2(p-1)}{3-p}}.
\end{equation*}
Then we get
\begin{align*}
\mathrm{Cap}_{ p}(K) &\leq  \left( \frac{3-p}{p-1} \right)^{p-1} (4\pi)^{\frac{p-1}{2}} |\Sigma|^{\frac{3-p}{2}} s^{\frac{3-p}{2}}  \theta^{1-p},
\end{align*}
where
\begin{align*}
\theta := \int_{0}^{s^{\frac{3-p}{2(p-1)}}} \left(1 +  r^{\frac{2(p-1)}{3-p}}\right)^{-\frac{1}{2}}\,dr.
\end{align*}

Next we claim that if $\int_\Sigma H^2 d\mu>16\pi$, then only the strict inequality can occur
\begin{align*}
\mathrm{Cap}_{ p}(K) &<  \left( \frac{3-p}{p-1} \right)^{p-1} (4\pi)^{\frac{p-1}{2}} |\Sigma|^{\frac{3-p}{2}} s^{\frac{3-p}{2}}  \theta^{1-p}.
\end{align*}
Otherwise assume the equality holds. Then checking the above proof, we see that
\begin{align*}
|\Sigma'|=|\Sigma|, \quad \int_{\Sigma'}H'^2d\mu=\int_\Sigma H^2 d\mu,\quad \widetilde{m}_H(\Sigma_t)=\widetilde{m}_H(\Sigma'), \; \forall t>0,
\end{align*}
and $f(x)=\bar{f}(\phi(x))$ is a $p$-harmonic function on $U$ with $f|_\Sigma=1$ and $f(\infty)=0$. So $\Sigma$ is outer-minimizing and the modified Hawking mass $\widetilde{m}_H(\Sigma_t)$ is equal to $\widetilde{m}_H(\Sigma)$ for all $t$. Moreover, since $f(x)$ is $p$-harmonic in $U$, any level set of $f(x)$ can not have non-empty interior by the strong maximum principle, and so the surfaces $\Sigma_t$ and $\Sigma$ do not jump to $\Sigma_t'$ and $\Sigma'$ respectively, in the sense of \cite{HI01} (meaning $\Sigma_t=\Sigma_t'$ and $\Sigma=\Sigma'$). Next fix any $t>0$ and consider the exterior domain of $\Sigma_t$ in $U$. Using the fact $f(x)$ is constant on $\Sigma_t$ and $\Sigma_t$ is at least $C^1$, by the Hopf boundary lemma, we see that $\nabla f$ never vanishes on $\Sigma_t$. So $\phi=\bar{f}^{-1}\circ f$ is a smooth function on $U$ with $\nabla \phi\neq 0$. It follows that the surfaces $\{\Sigma_t\}$ evolve smoothly by the inverse mean curvature flow. Then the equality case of $\widetilde{m}_H(\Sigma_t)\geq \widetilde{m}_H(\Sigma)$ implies that $H$ is constant on $\Sigma_t$, and so $\Sigma_t$ is a round sphere. So $\Sigma$ itself is a round sphere, which contradicts $\int_\Sigma H^2 d\mu>16\pi$. Therefore when $\int_\Sigma H^2 d\mu>16\pi$, we have the strict inequality
\begin{align*}
\mathrm{Cap}(K)&< \left( \frac{3-p}{p-1} \right)^{p-1} (4\pi)^{\frac{p-1}{2}} |\Sigma|^{\frac{3-p}{2}} s^{\frac{3-p}{2}}  \theta^{1-p}.
\end{align*}

So the proof of Theorem~\ref{thm5} is complete.

\end{proof}

\section{The upper bound in $\mathbb{R}^3$ via the inverse anisotropic mean curvature flow}\label{sec6}

\begin{proof}[Proof of Theorem~\ref{thm6}]

For the proof we use the inverse anisotropic mean curvature flow. Recall the estimate (3.1) in \cite{LX22},
\begin{equation*}
		\mathrm{Cap}_{F, p}(K)\leq \left(\int_0^\infty  T_p^{\frac{1}{1-p}}(t)dt\right)^{1-p},
	\end{equation*}
where (see Section~3.2 in \cite{LX22})
\begin{align}\label{eq-inequality-surface}
T_p(t) &= \int_{\Sigma_t} H_F^{p-1} \,d\mu_F \leq \left(\int_{\Sigma_t} H_F^2\,d\mu_F\right)^{\frac{p-1}{2}} \left( |\Sigma_t|_F \right)^{\frac{3-p}{2}},
\end{align}
after using the H\"{o}lder inequality.

Next we define the modified anisotropic Hawking mass
\begin{equation*}
m_H^F\left( \Sigma_t\right) := \frac{|\Sigma_t|_F}{4 |\PW|_F} \left(1 - \frac{\int_{\Sigma_t} H_F^2\,d\mu_F}{4|\PW|_F}\right).
\end{equation*}
\begin{lem}
Let $\Sigma$ be a compact star-shaped $F$-mean convex surface without boundary in $\R^3$. Along the inverse anisotropic mean curvature flow \eqref{eq-IAMCF} starting from $\Sigma$, the modified anisotropic Hawking mass $m_H^F(\Sigma_t)$ is non-decreasing in $t$. Moreover, if $(d/dt)m_H^F(\Sigma_t)=0$ at some time $t>0$, then $\Sigma$ is a translated scaled Wulff shape.
\end{lem}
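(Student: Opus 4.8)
The plan is to transplant the Geroch-type monotonicity argument for the modified Hawking mass (as used in Section~\ref{sec5} and in \cite{Xio22,HW15}) into the anisotropic setting in $\R^3$, exploiting the effect of the flow on the anisotropic area element and on the anisotropic Willmore energy $\int_{\Sigma_t}H_F^2\,d\mu_F$. Concretely, along the inverse anisotropic mean curvature flow \eqref{eq-IAMCF} in $\R^3$, one has from Xia's analysis \cite{Xia17} (see also \cite{LX22}) that $\pt_t(d\mu_F)=d\mu_F$, hence $|\Sigma_t|_F=e^t|\Sigma_0|_F$, together with an evolution equation for $H_F$ of the shape
\begin{equation*}
\pt_t H_F=-\mathcal{L}_F\!\left(\frac{1}{H_F}\right)-\frac{1}{H_F}\left((\kappa_1^F)^2+(\kappa_2^F)^2\right),
\end{equation*}
where $\mathcal{L}_F$ is the linearization of the anisotropic mean curvature operator along $\Sigma_t$: a second-order elliptic operator which annihilates constants and is self-adjoint with respect to $d\mu_F$, so that $\int_{\Sigma_t}u\,\mathcal{L}_F v\,d\mu_F=-\int_{\Sigma_t}\mathcal{B}_F(\n^{\Sigma_t}u,\n^{\Sigma_t}v)\,d\mu_F$ for a positive-definite symmetric bilinear form $\mathcal{B}_F$. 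Since $\Sigma$ is star-shaped, every $\Sigma_t$ is an embedded topological sphere, so the purely algebraic identity $(\kappa_1^F)^2+(\kappa_2^F)^2=H_F^2-2\kappa_1^F\kappa_2^F$ holds pointwise, and the anisotropic Gauss--Bonnet theorem gives the \emph{exact} identity $\int_{\Sigma_t}\kappa_1^F\kappa_2^F\,d\mu_F=|\PW|_F$ (the constant being verified on a scaled Wulff shape $r\PW$, where $\kappa_i^F\equiv 1/r$ and $|r\PW|_F=r^2|\PW|_F$).

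Set $A(t):=\int_{\Sigma_t}H_F^2\,d\mu_F$. Using the above facts and integrating by parts the $\mathcal{L}_F$ term,
\begin{align*}
A'(t)&=\int_{\Sigma_t}\left(2H_F\,\pt_t H_F+H_F^2\right)d\mu_F\\
&=-\int_{\Sigma_t}\frac{2}{H_F^2}\,\mathcal{B}_F\!\left(\n^{\Sigma_t}H_F,\n^{\Sigma_t}H_F\right)d\mu_F-\int_{\Sigma_t}H_F^2\,d\mu_F+4\int_{\Sigma_t}\kappa_1^F\kappa_2^F\,d\mu_F\\
&=-\,G(t)-A(t)+4|\PW|_F,
\end{align*}
where $G(t):=\int_{\Sigma_t}\frac{2}{H_F^2}\,\mathcal{B}_F(\n^{\Sigma_t}H_F,\n^{\Sigma_t}H_F)\,d\mu_F\geq 0$ and where we used $-2\big((\kappa_1^F)^2+(\kappa_2^F)^2\big)+H_F^2=-H_F^2+4\kappa_1^F\kappa_2^F$. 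Combining with $|\Sigma_t|_F=e^t|\Sigma_0|_F$ yields $\frac{d}{dt}\big(|\Sigma_t|_F A(t)\big)=|\Sigma_t|_F\big(A'(t)+A(t)\big)=|\Sigma_t|_F\big(4|\PW|_F-G(t)\big)$, and since $m_H^F(\Sigma_t)=\frac{|\Sigma_t|_F}{4|\PW|_F}-\frac{|\Sigma_t|_F A(t)}{16|\PW|_F^2}$ we obtain
\begin{equation*}
\frac{d}{dt}\,m_H^F(\Sigma_t)=\frac{|\Sigma_t|_F}{4|\PW|_F}-\frac{|\Sigma_t|_F\big(4|\PW|_F-G(t)\big)}{16|\PW|_F^2}=\frac{|\Sigma_t|_F}{16|\PW|_F^2}\,G(t)\geq 0,
\end{equation*}
which is the claimed monotonicity.

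For the rigidity, note that the last display is an equality, so $\frac{d}{dt}m_H^F(\Sigma_{t_0})=0$ for some $t_0>0$ forces $G(t_0)=0$; as $\mathcal{B}_F$ is positive definite and $H_F>0$, this means $\n^{\Sigma_{t_0}}H_F\equiv 0$, i.e., $H_F$ is a positive constant on the closed embedded surface $\Sigma_{t_0}$. By the anisotropic Alexandrov-type theorem (the Wulff shape is the only closed embedded hypersurface of constant anisotropic mean curvature in $\R^n$), $\Sigma_{t_0}$ is a translated scaled Wulff shape. Finally, the inverse anisotropic mean curvature flow issuing from a Wulff shape is an explicit family of concentric expanding Wulff shapes; writing the star-shaped flow surfaces as radial graphs reduces \eqref{eq-IAMCF} to a strictly parabolic scalar equation, for which smooth solutions are unique forward and backward in time, so this family coincides with $\{\Sigma_s\}_{s\in[0,t_0]}$, which is smooth by Xia's theorem; hence $\Sigma=\Sigma_0$ is a translated scaled Wulff shape.

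The step most in need of care is the first one: establishing the evolution equation for $H_F$ with the correct self-adjoint divergence structure of $\mathcal{L}_F$. Unlike in the isotropic case, the anisotropic shape operator $d\nu_F$ is not symmetric with respect to the induced metric, so one must work with the tensor $A_F$ (equivalently, with the metric on $\Sigma_t$ pulled back via $\nu_F$ from $\PW$) under which the linearized operator becomes self-adjoint with respect to $d\mu_F$ and the integration by parts above is valid; this is, however, exactly the content of \cite{Xia17} and of Section~3.2 of \cite{LX22}, which we invoke. A secondary point is to confirm the anisotropic Gauss--Bonnet identity $\int_{\Sigma_t}\kappa_1^F\kappa_2^F\,d\mu_F=|\PW|_F$ for merely star-shaped (not necessarily $F$-convex) surfaces, which holds because it is a degree statement for the anisotropic Gauss map $\nu_F:\Sigma_t\to\PW$.
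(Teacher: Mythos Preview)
Your argument is correct and follows essentially the same route as the paper's proof: both compute the evolution of $\int_{\Sigma_t}H_F^2\,d\mu_F$ along the inverse anisotropic mean curvature flow (the paper quotes this as formula~(3.2) in \cite{LX22}, which is exactly your integration-by-parts identity for $\mathcal{L}_F$), replace $|\hat h|_{\hat g}^2$ by $H_F^2-2K_F$, invoke the anisotropic Gauss--Bonnet relation to bring in $|\PW|_F$, and then deduce monotonicity of $m_H^F$ together with the rigidity via constancy of $H_F$ and \cite{HLMG09}. The only cosmetic difference is that you use the Gauss--Bonnet identity $\int_{\Sigma_t}K_F\,d\mu_F=|\PW|_F$ as an equality (via the degree of $\nu_F$), yielding an exact formula $\frac{d}{dt}m_H^F(\Sigma_t)=\frac{|\Sigma_t|_F}{16|\PW|_F^2}G(t)$, whereas the paper cites it as the inequality of Lemma~25 in \cite{LX22}; since the flow surfaces are spheres, your sharper version is valid and makes the rigidity step marginally cleaner.
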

\begin{proof}
First recalling the computation in the formula (3.2) in \cite{LX22}, we get (let $p=3$ there)
	\begin{align*}
					&\frac{d}{d t} \int_{\Sigma_t} H_F^2\,d\mu_F= \int_{\Sigma_t}\left( -2 \frac{|\hat{\nabla} H_F|_{\hat{g}}^2}{H_F^2} -2|\hat{h}|_{\hat{g}}^{2} +H_F^2\right) d\mu_F.
	\end{align*}
Here $\hat{g}$ is the anisotropic first fundamental form, $\hat{h}$ is the anisotropic second fundamental form, and $\hat{\nabla}$ is the Levi-Civita connection corresponding to $\hat{g}$ on the surface; see \cite{LX22} for more details.
Using $H_F^2 =2K_F+ |\hat{h}|_{\hat{g}}^2$ and Lemma~25 in \cite{LX22}, we obtain
\begin{align*}
		\frac{d}{d t} \int_{\Sigma_t} H_F^2\,d\mu_F &= \int_{\Sigma_t} \left(-2 \frac{|\hat{\nabla} H_F|_{\hat{g}}^2}{H_F^2} -H_F^{2} + 4K_F \right)d\mu_F\\
		&\le 4|\PW|_F - \int_{\Sigma_t} H_F^{2} \,d\mu_F.
\end{align*}
Therefore in view of $(d/dt)|\Sigma_t|_F=|\Sigma_t|_F$ we conclude
\begin{align*}
	&\frac{d}{dt} \left(  |\Sigma_t|_F  \left( 4|\PW|_F - \int_{\Sigma_t} H_F^2 \,d\mu_F \right) \right) \\
&\ge |\Sigma_t|_F  \left( 4|\PW|_F - \int_{\Sigma_t} H_F^2 \,d\mu_F \right)- |\Sigma_t|_F  \left( 4|\PW|_F - \int_{\Sigma_t} H_F^2 \,d\mu_F \right)\\
&=0.
\end{align*}
So the modified anisotropic Hawking mass is non-decreasing in $t$.

Now assume $(d/dt)m_H^F(\Sigma_t)=0$ at some time $t>0$. Then checking the above argument we see that $H_F$ is constant on $\Sigma_t$, which implies that $\Sigma_t$ is a translated scaled Wulff shape (\cite{HLMG09}). So the initial surface $\Sigma$ is a translated scaled Wulff shape. The proof is complete.

\end{proof}

Note that $m_H^F(\Sigma_t)$ is monotone non-decreasing in $t$. So
\begin{align*}
m_H^F\left(\Sigma\right) \leq m_H^F\left( \Sigma_t\right) = \frac{|\Sigma_t|_F}{4 |\PW|_F} \left(1 - \frac{\int_{\Sigma_t} H_F^2\,d\mu_F}{4|\PW|_F}\right),
\end{align*}
which means
\begin{align*}
\int_{\Sigma_t} H_F^2 \,d\mu_F \le 4 |\PW|_F\left( 1- \frac{4 |\PW|_F}{|\Sigma_t|_F} m_H^F\left(\Sigma\right)\right).
\end{align*}
Setting
\begin{equation}
s= \frac{\int_{\Sigma} H_F^2\,d\mu_F}{4|\PW|_F} -1,
\end{equation}
and noting $|\Sigma_t|_F = |\Sigma|_F e^t$, we obtain
\begin{align*}
\int_{\Sigma_t} H_F^2 \,d\mu_F \le 4 |\PW|_F\left( 1+se^{-t}\right).
\end{align*}
Consequently, from \eqref{eq-inequality-surface} we get	
\begin{align*}
T_p(t) &\le \left( 4 |\PW|_F\right)^{\frac{p-1}{2}}\left(1+se^{-t} \right)^{\frac{p-1}{2}}  \left( |\Sigma_t|_F \right)^{\frac{3-p}{2}}\\
&=(4|\PW|_F)^{\frac{p-1}{2}}|\Sigma|_F^{\frac{3-p}{2}}\left( 1+se^{-t}\right)^{\frac{p-1}{2}}   e^{\frac{3-p}{2}t},
\end{align*}
where we used $|\Sigma_t|_F = |\Sigma|_F e^t$.
	
So we get
\begin{align*}
&\mathrm{Cap}_{F, p}(K)\leq \left(\int_0^\infty  T_p^{\frac{1}{1-p}}(t)dt\right)^{1-p}\\
&\leq (4|\PW|_F)^{\frac{p-1}{2}}|\Sigma|_F^{\frac{3-p}{2}}\left(\int_0^\infty \left( 1+se^{-t} \right)^{-1/2}e^{\frac{3-p}{2(1-p)}t}dt\right)^{1-p}.
\end{align*}	

If $m_H^F(\Sigma)=0$, then by Proposition~26 in \cite{LX22} the surface $\Sigma$ is a translated scaled Wulff shape $r_0\pt \mathcal{W}+x_0$ ($r_0>0$, $x_0\in \R^3$) and we can compute directly to get
\begin{align*}
\mathrm{Cap}_{F, p}\left(K\right) = \left(\frac{3-p}{p-1}\right)^{p-1} |\PW|_F \; r_0^{3-p}.
\end{align*}
%
	
If $m_H^F(\Sigma)<0$, then using the change of variables
\begin{align*}
se^{-t}= r^{\frac{2(p-1)}{3-p}},
\end{align*}
we get by direct computation
\begin{align*}
\mathrm{Cap}_{F, p}(K) &\leq  \left( \frac{3-p}{p-1} \right)^{p-1} |\PW|_F^{\frac{p-1}{2}} |\Sigma|_F^{\frac{3-p}{2}}s^{\frac{3-p}{2}}  \theta^{1-p},
\end{align*}
where
\begin{align*}
\theta := \int_{0}^{s^{\frac{3-p}{2(p-1)}}} \left(1 +  r^{\frac{2(p-1)}{3-p}}\right)^{-\frac{1}{2}}\,dr.
\end{align*}

	Should the equality hold, then \eqref{eq-inequality-surface} must be an equality. The H\"{o}lder inequality becomes an equality, which implies that $H_F$ is constant. So $\Sigma_t$ and then $\Sigma$ are translated scaled Wulff shapes (\cite{HLMG09}), which is impossible. Thus we cannot have the equality in this case. The proof is now complete.

\end{proof}

\begin{proof}[Proof of Corollary~\ref{corr1}]
Let $p = 2$ in Theorem~\ref{thm6}. If $m_H^F(\Sigma) = 0$, then $\int_{\Sigma} H_F^2\,d\mu_F=4|\PW|_F$ and the conclusion follows immediately.

If $m_H^F(\Sigma) < 0$, then $\int_{\Sigma} H_F^2\,d\mu_F>4|\PW|_F$ and we obtain
\begin{equation*}
\mathrm{Cap}_{F,2}\left( K\right) < \sqrt{|\PW|_F |\Sigma|_F}s^{1/2} \cdot \theta^{-1},
\end{equation*}
where
\begin{equation*}
		\theta = \int_{0}^{ \sqrt{s}} \left(1 +  r^2\right)^{-\frac{1}{2}}\,dr = \mathrm{arsinh} \sqrt{ s}\text{ and }s=  \frac{\int_{\Sigma} H_F^2\,d\mu_F}{4|\PW|_F}-1.
\end{equation*}
Thus we get
\begin{equation*}
\mathrm{Cap}_{F,2}( K) < \sqrt{|\PW|_F |\Sigma|_F} \frac{\sqrt{s}}{\mathrm{arsinh}\sqrt{s}}.
\end{equation*}
So we finish the proof of Corollary~\ref{corr1}.
\end{proof}


\bibliographystyle{Plain}

\end{document}